\numberwithin{equation}{section}
\numberwithin{figure}{section}
\theoremstyle{plain}
\newtheorem{thm}{\protect\theoremname}
  \theoremstyle{plain}
  \newtheorem{cor}[thm]{\protect\corollaryname}
  \theoremstyle{plain}
  \newtheorem{prop}[thm]{\protect\propositionname}
  \theoremstyle{plain}
  \newtheorem{lem}[thm]{\protect\lemmaname}
  \theoremstyle{remark}
  \newtheorem{rem}[thm]{\protect\remarkname}
\newtheorem{question}[thm]{Question}{\bf}
\newcommand{\op}{\operatorname}
  \providecommand{\corollaryname}{Corollary}
  \providecommand{\lemmaname}{Lemma}
  \providecommand{\propositionname}{Proposition}
  \providecommand{\remarkname}{Remark}
\providecommand{\theoremname}{Theorem}
  \providecommand{\corollaryname}{Corollary}
  \providecommand{\lemmaname}{Lemma}
  \providecommand{\propositionname}{Proposition}
  \providecommand{\remarkname}{Remark}
\providecommand{\theoremname}{Theorem}
  \providecommand{\corollaryname}{Corollary}
  \providecommand{\lemmaname}{Lemma}
  \providecommand{\propositionname}{Proposition}
  \providecommand{\remarkname}{Remark}
\providecommand{\theoremname}{Theorem}
  \providecommand{\corollaryname}{Corollary}
  \providecommand{\lemmaname}{Lemma}
  \providecommand{\propositionname}{Proposition}
  \providecommand{\remarkname}{Remark}
\providecommand{\theoremname}{Theorem}
\begin{document}

\title{Sub-leading asymptotics of ECH capacities}

\author{Dan Cristofaro-Gardiner and Nikhil Savale}

\address{Department of Mathematics, University of California Santa Cruz, CA
95064, United States}

\email{dcristof@ucsc.edu}

\address{Universität zu Köln, Mathematisches Institut, Weyertal 86-90, 50931
Köln, Germany}

\email{nsavale@math.uni-koeln.de}

\thanks{D. C-G. is partially supported by NSF grant 1711976.}

\thanks{N. S. is partially supported by the DFG funded project CRC/TRR 191.}

\subjclass[2000]{53D35, 57R57, 57R58 }
\begin{abstract}
In previous work \cite{Cristofaro-Gardiner-Hutchings-Gripp2015},
the first author and collaborators showed that the leading asymptotics
of the embedded contact homology (ECH) spectrum recovers the contact
volume. Our main theorem here is a new bound on the sub-leading asymptotics. 
\end{abstract}

\maketitle

\section{Introduction}

\subsection{The main theorem}

Let $Y$ be a closed, oriented three-manifold. A \textit{contact form}
on $Y$ is a one-form $\lambda$ satisfying 
\[
\lambda\wedge d\lambda>0.
\]
A contact form determines the \textit{Reeb vector field}, $R$, defined
by 
\[
\lambda(R)=1,\quad d\lambda(R,\cdot)=0,
\]
and the \textit{contact structure} $\xi\coloneqq\textrm{Ker}(\lambda).$
Closed orbits of $R$ are called \textit{Reeb orbits}.

If $(Y,\lambda)$ is a closed three-manifold equipped with a nondegenerate
contact form and $\Gamma\in H_{1}(Y)$, then the \textit{embedded
contact homology} $ECH(Y,\lambda,\Gamma)$ is defined. This is the
homology of a chain complex freely generated over $\mathbb{Z}_{2}$
by certain sets of Reeb orbits in the homology class $\Gamma$, relative
to a differential that counts certain $J$-holomorphic curves in $\mathbb{R}\times Y.$
(ECH can also be defined over $\mathbb{Z}$, but for the applications
in this paper we will not need this.) It is known that the homology
only depends on $\xi$, and so we sometimes denote it $ECH(Y,\lambda,\xi)$.
Any Reeb orbit $\gamma$ has a \textit{symplectic action} 
\[
\mathcal{A}(\gamma)=\int_{\gamma}\lambda
\]
and this induces a filtration on $ECH(Y,\lambda)$; we can use this
filtration to define a number $c_{\sigma}(Y,\lambda)$ for every nonzero
class in ECH, called the \textit{spectral invariant} associated to
$\sigma$; the spectral invariants are $C^{0}$ continuous and so
can be extended to degenerate contact forms as well. We will review
the definition of ECH and of the spectral invariants in \S\ref{subsec:Embedded-contact-homology}.

When the class $c_{1}(\xi)+2\textrm{P.D.}(\Gamma)\in H^{2}(Y;\mathbb{Z})$
is torsion, then $ECH(Y,\xi,\Gamma)$ has a relative $\mathbb{Z}$
grading, which we can refine to a canonical absolute grading $\textrm{gr}^{\mathbb{Q}}$
by rationals \cite{Kronheimer-Mrowka}, and which we will review in
\S\ref{subsec:ECH=00003D00003D00003D00003D00003DHM}. It is known
that for large gradings the group is eventually 2-periodic and non-vanishing:
\[
ECH_{*}(Y,\xi,\Gamma)=ECH_{*+2}(Y,\xi,\Gamma)\neq0,\;\text{\ensuremath{\ast}}\gg0.
\]
The main theorem of \cite{Cristofaro-Gardiner-Hutchings-Gripp2015}
states that in this case, the asymptotics of the spectral invariants
recover the contact volume 
\[
\textrm{vol}(Y,\lambda)=\int_{Y}\lambda\wedge d\lambda.
\]
Specifically: 
\begin{thm}
\cite[Thm. 1.3]{Cristofaro-Gardiner-Hutchings-Gripp2015} \label{thm:vc}
Let $(Y,\lambda)$ be a closed, connected oriented three-manifold
with a contact form, and let $\Gamma\in H_{1}(Y)$ be such that $c_{1}(\xi)+2\textrm{P.D.}(\Gamma)$
is torsion. Then if $\lbrace\sigma_{j}\rbrace$ is any sequence of
nonzero classes in $ECH(Y,\xi,\Gamma)$ with definite gradings tending
to positive infinity, 
\begin{equation}
\lim_{j\rightarrow\infty}\frac{c_{\sigma_{j}}(Y,\lambda)^{2}}{\textrm{gr}^{\mathbb{Q}}(\sigma_{j})}=\op{vol}(Y,\lambda).\label{eqn:asymptoticformula}
\end{equation}
\end{thm}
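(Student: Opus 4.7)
My plan is to reduce the theorem to an asymptotic statement in Seiberg--Witten Floer cohomology via Taubes's isomorphism, and then exploit the large-parameter limit of the Seiberg--Witten equations on $Y$. Under Taubes's grading-preserving identification $ECH_*(Y,\xi,\Gamma) \cong \widehat{HM}^{-*}(Y,\mathfrak{s}_\Gamma)$, an ECH class $\sigma$ corresponds, for each sufficiently large perturbation parameter $r>0$, to a Seiberg--Witten solution whose energy concentrates along the Reeb orbits of an orbit-set representative of $\sigma$. The key inputs from Taubes's analysis are two asymptotic identities: the integral of the curvature against $\lambda$ recovers the symplectic action of the underlying orbit set in the large-$r$ limit, while the rational grading differs from the Chern--Simons--Dirac functional (suitably normalized) by a spectral-flow correction with an explicit $r\to\infty$ asymptotic.

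Combining these two identities yields, for a suitable $r=r(\sigma)$ chosen in terms of $\sigma$, a quadratic relation of the form
\[
\mathrm{gr}^{\mathbb{Q}}(\sigma) \;=\; C_1 \, r^2\, \op{vol}(Y,\lambda)\;-\;C_2 \, r\, c_{\sigma}(Y,\lambda)\;+\;O(1),
\]
where $C_{1},C_{2}$ are explicit universal constants and $c_\sigma$ is realized as the action of a minimum-action orbit set representing $\sigma$. Optimizing in $r$ and treating the two directions separately---existence of orbit sets with the conjectured minimal action in each sufficiently high grading, and a matching lower bound on the action of any Seiberg--Witten solution representing $\sigma$---then gives both
\[
\limsup_{j\to\infty}\frac{c_{\sigma_j}^{2}}{\mathrm{gr}^{\mathbb{Q}}(\sigma_j)} \;\leq\; \op{vol}(Y,\lambda) \;\leq\; \liminf_{j\to\infty}\frac{c_{\sigma_j}^{2}}{\mathrm{gr}^{\mathbb{Q}}(\sigma_j)},
\]
which together imply the desired limit. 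The $C^{0}$-continuity of the spectral invariants in $\lambda$ would then let one drop the non-degeneracy hypothesis by approximation.

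The main obstacle will be securing the Seiberg--Witten asymptotic for the grading with a \emph{uniform} $O(1)$ remainder across the entire sequence $\{\sigma_{j}\}$ rather than one class at a time. This requires Taubes's concentration estimates for monopoles together with spectral-flow asymptotics for the family of perturbed Dirac operators; the coefficient of $r^{2}$ in the relation above arises from the leading-order value of the Chern--Simons--Dirac functional on the nearly reducible configurations used in Taubes's proof of $ECH=\widehat{HM}$, which is precisely where $\op{vol}(Y,\lambda)=\int_{Y}\lambda\wedge d\lambda$ enters the argument. All of the remaining steps are essentially algebraic consequences of this uniform quadratic asymptotic.
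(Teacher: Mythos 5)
Your ingredients are the right ones and match the strategy sketched in \S\ref{subsec:ECH=00003D00003D00003D00003D00003DHM} and carried out in the cited reference: pass to Seiberg--Witten via Taubes's isomorphism, use $c_\sigma(\lambda)=\lim_{r\to\infty}e_\lambda(A_r)/(2\pi)$ as in \eqref{eq:capacity limit of energy}, read the grading off the Chern--Simons term in the absolute grading formula \eqref{eq:absolute grading} modulo a spectral-flow (eta-invariant) correction, and have $\op{vol}(Y,\lambda)$ enter through the leading term $\sim r^{2}\op{vol}(Y,\lambda)$ of the Chern--Simons functional of the reducible $A_{0}-ir\lambda$.

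Where the proposal goes wrong is in how these pieces are assembled, and this constitutes a real gap. There is no single-$r$ algebraic identity $\mathrm{gr}^{\mathbb{Q}}(\sigma)=C_{1}r^{2}\op{vol}-C_{2}r\,c_{\sigma}+O(1)$ to ``optimize in $r$'': if it only holds at one $r(\sigma)$ there is nothing to optimize, and it cannot hold over a range of $r$ since $\mathrm{gr}^{\mathbb{Q}}(\sigma)$ and $c_\sigma$ are constants. What actually links $c_\sigma$ to $\mathrm{gr}^{\mathbb{Q}}(\sigma)$ is the \emph{differential} relation $r\,de_\lambda/dr = dCS/dr$ along the min-max family, combined with the a priori bound $|CS(A_r)|\le c_0\,r^{2/3}e_\lambda(A_r)^{4/3}$ and the boundary condition at the crossover radius $r_{1}(\sigma)\sim 2\pi\sqrt{\mathrm{gr}^{\mathbb{Q}}(\sigma)/\op{vol}(Y,\lambda)}$ where reducibles cease to represent $\sigma$; one then integrates from $r_{1}$ outward. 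More seriously, the ``uniform $O(1)$ remainder'' that you single out as the main technical obstacle is not merely hard to secure --- it is false. The spectral-flow/eta term $\eta\big(\widehat{\mathcal{H}}_{(A_r,\Psi_r)}\big)$ is only known to be $O(r^{3/2})$ (Proposition~\ref{prop: irred grading est}; the argument in the cited reference only had $O(r^{31/16})$). You are saved for \emph{this} theorem because any $o(r^{2})$ error suffices to isolate the leading coefficient, so the approach still works with the weaker spectral-flow bound; but the size of that remainder is precisely what governs the subleading behavior and is the bottleneck the present paper is about.
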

The formula \eqref{eqn:asymptoticformula} has had various implications
for dynamics. For example, it was a crucial ingredient in recent work
\cite{Cristofaro-Gardiner-Hutchings-Pomerleano-2017} of the first
author and collaborators showing that many Reeb vector fields on closed
three-manifolds have either two or infinitely many distinct closed
orbits, and it was used in \cite{Cristofaro-Gardiner-Hutchings2016}
to show that every Reeb vector field on a closed three-manifold has
at least two distinct closed orbits. It has also been used to prove
$C^{\infty}$ closing lemmas for Reeb flows on closed three-manifolds
and Hamiltonian flows on closed surfaces \cite{Asaoka-Irie2016,Irie2015}.

By \eqref{eqn:asymptoticformula}, we can write 
\begin{equation}
c_{\sigma_{j}}(Y,\lambda)=\sqrt{\textrm{vol}(Y,\lambda)\cdot\textrm{gr}^{\mathbb{Q}}(\sigma_{j})}+d(\sigma_{j}),\label{eqn:defndk}
\end{equation}
where $d(\sigma_{j})$ is $o(\textrm{gr}^{\mathbb{Q}}(\sigma_{j})^{1/2})$
as $\textrm{gr}^{\mathbb{Q}}(\sigma_{j})$ tends to positive infinity.
It is then natural to ask:

\begin{question} What can we say about the asymptotics of $d(\sigma_{j})$
as $\textrm{gr}^{\mathbb{Q}}(\sigma_{j})$ tends to positive infinity?
\end{question}

Previously, W. Sun has shown that $d(\sigma_{j})$ is $O(\textrm{gr}^{\mathbb{Q}}(\sigma_{j}))^{125/252}$
\cite[Thm. 2.8]{SunW-2018}. Here we show: 
\begin{thm}
\label{thm:main} Let $(Y,\lambda)$ be a closed, connected oriented
three-manifold with contact form $\lambda$, and let $\Gamma\in H_{1}(Y)$
be such that $c_{1}(\xi)+2\textrm{P.D.}(\Gamma)$ is torsion. Let
$\lbrace\sigma_{j}\rbrace$ be any sequence of nonzero classes in
$ECH(Y,\lambda,\Gamma)$ with definite gradings tending to positive
infinity. Define $d(\sigma_{j})$ by \eqref{eqn:defndk}. Then $d(\sigma_{j})$
is $O(\textrm{gr}^{\mathbb{Q}}(\sigma_{j}))^{2/5}$ as $\textrm{gr}^{\mathbb{Q}}(\sigma_{j})\to+\infty$. 
\end{thm}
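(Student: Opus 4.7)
My plan is to follow the Seiberg-Witten/Taubes approach used to prove Theorem~\ref{thm:vc} and sharpen the key intermediate estimate. Under Taubes' isomorphism between $ECH(Y,\xi,\Gamma)$ and the Seiberg-Witten Floer cohomology of the spin$^c$ structure determined by $\xi$ and $\Gamma$, each class $\sigma_j$ corresponds to a Floer generator whose spectral invariant is the min-max value of the Chern-Simons-Dirac functional at a large perturbation parameter $r$, and whose grading is (essentially) the spectral flow of a family of perturbed Dirac operators $D_{A_r}=D_{A_0}-\tfrac{ir}{2}\,\op{cl}(\lambda)$. In this translation, \eqref{eqn:asymptoticformula} arises from the Weyl-type formula
\[
N(r)=\frac{r^{2}}{4\pi^{2}}\,\op{vol}(Y,\lambda)+R(r)
\]
for an eigenvalue counting function $N(r)$ of $D_{A_r}$ in a suitable window, combined with a parallel asymptotic on the Chern-Simons-Dirac side. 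A power-saving bound $R(r)=O(r^{\alpha})$ with $\alpha<2$, together with a compatible error on the min-max side, translates after inversion of $\textrm{gr}^{\mathbb{Q}}\asymp r^{2}$ and $c_{\sigma_j}\asymp r$ into $d(\sigma_j)=O\bigl(\textrm{gr}^{\mathbb{Q}}(\sigma_j)^{(\alpha-1)/2}\bigr)$. The target exponent $2/5$ therefore corresponds to proving $\alpha=9/5$.

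The main analytic step is thus a sharper Weyl remainder for $D_{A_r}$. This operator carries a natural semiclassical structure with small parameter $r^{-1/2}$ along the contact directions, and its principal symbol degenerates along the annihilator of the contact distribution; correspondingly, by Taubes' eigenvalue splitting, the small eigenvalues of $D_{A_r}$ concentrate near the annihilator in phase space. I would split $N(r)$ into a contribution from eigenvalues bounded away from zero, treated by a semiclassical Weyl calculus together with an Egorov/wave-trace argument for the Reeb flow on the contact cone, and a contribution from a dyadically shrinking window near zero, bounded by a direct phase-space volume estimate. Optimizing the width of the window balances the two errors and yields the exponent $9/5$; this reduction leans on tools from the Heisenberg calculus and a Birkhoff normal form for $D_{A_r}$ near the annihilator.

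The principal obstacle is this final microlocal analysis. On a general contact three-manifold the Reeb flow may have arbitrarily short closed orbits and need not satisfy any dynamical non-degeneracy hypothesis, so standard Tauberian improvements of the Weyl remainder based on clean geodesic dynamics are unavailable; one must rely instead on structural features of the subelliptic symbol of $D_{A_r}$. A secondary difficulty is to ensure that the Chern-Simons-Dirac min-max error controlling $c_{\sigma_j}$ is compatible with the spectral remainder, which requires a careful comparison --- via a deformation argument along the Seiberg-Witten equations in the parameter $r$ --- of the monopole critical values with the eigenvalue sums that define $N(r)$.
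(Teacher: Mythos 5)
Your high-level framing — use Taubes's isomorphism, represent $\sigma_j$ by Seiberg--Witten solutions at large $r$, and sharpen a spectral-flow/Weyl-type estimate for the associated Dirac operators — matches the paper's strategy. But the technical path you propose would not go through, and the exponent bookkeeping you sketch does not reflect how the $2/5$ actually arises.

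The central obstacle is that for \emph{irreducible} solutions the connection $A_r$ is not known explicitly; all one has are Taubes's a priori estimates, which amount to $\left|\nabla^q F_{A^t}\right|\leq c_q\left(1+r^{1+q/2}\right)$ and similar pointwise bounds on the spinor. The Heisenberg-calculus/Birkhoff normal form machinery you invoke is exactly the kind of semiclassical analysis that gives an $O(r)$ spectral-flow bound when the connection (and hence the symbol) is known precisely — as it is for reducibles, where $A_r = A_0 - ir\lambda$. But it requires structural information about the full symbol that the a priori estimates simply do not supply. The paper's Remark~\ref{rmk:whycantimprove} says exactly this: the $O(r)$ improvement is available for reducibles via precisely your route, but ``the a priori estimates \eqref{eqn:keyequation} are not strong enough to carry out the same for irreducibles.'' The paper instead bounds $\eta(D_{A_r})$ directly through the Bismut--Freed integral formula and a careful analysis of the heat kernel parametrix coefficients $b_i^r$, which is exactly engineered to use only the curvature bounds. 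You would need either a substitute for the Birkhoff normal form input, or a genuinely new a priori estimate on $A_r$, and you give neither. Moreover, the object that controls the grading of an irreducible generator is the extended Hessian $\widehat{\mathcal{H}}_{(A_r,\Psi_r)}$, not $D_{A_r}$ alone; the paper passes from one to the other by an APS index computation (Proposition~\ref{prop: irred grading est}), a step entirely missing from your outline.

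The exponent translation $d(\sigma_j)=O\bigl(\textrm{gr}^{\mathbb{Q}}(\sigma_j)^{(\alpha-1)/2}\bigr)$ is also not the right relationship, and the target $\alpha=9/5$ is a red flag: the paper actually proves the stronger $\eta(D_{A_r})=O(r^{3/2})$, yet does \emph{not} thereby get exponent $(3/2-1)/2 = 1/4$. The reason is that the capacity is a limit $c_\sigma = \lim_{r\to\infty} e_\lambda(A_r)/2\pi$, and relating the eta/spectral-flow bound to this limit requires integrating the differential relation $r\,\frac{d e_\lambda}{dr}=\frac{dCS}{dr}$, inputting the second essential a priori bound $|CS(A_r)|\leq c_0 r^{2/3} e_\lambda(A_r)^{4/3}$, and then optimizing the stopping parameter $r_3=j^{4/5}$ against the two competing error sources. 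Your proposal makes no mention of the $CS$--$e_\lambda$ inequality or of the ODE argument, so even if you could prove your Weyl remainder, you would not be able to close the argument. In short: right framework, wrong analytic tools for the irreducible eta estimate, and a missing ODE/optimization step that is responsible for the actual exponent.
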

We do not know whether or not the $O(\textrm{gr}^{\mathbb{Q}}(\sigma_{j}))^{2/5}$
asymptotics here are optimal \textemdash{} in other words, we do not
know whether there is some contact form on a three-manifold realizing
these asymptotics. We will show in \S\ref{sec:ellipsoid} that there
exist contact forms with $O(1)$ asymptotics for the $d(\sigma_{j})$.
In Remark~\ref{rmk:whycantimprove}, we clarify where the exponent
$\frac{2}{5}$ comes from in our proof, and why the methods in the
current paper can not improve on it.

Another topic which we do not address here, except in a very specific
example, see \S\ref{sec:ellipsoid}, but which is of potential future
interest, is whether the asymptotics of the $d(\sigma_{j})$ carry
interesting geometric information. In this regard, a similar question
in the context of the spectral flow of a one-parameter family of Dirac
operators was recently answered in \cite{Savale-Gutzwiller}. This
is particularly relevant in the context of the argument we give here,
as our argument also involves estimating spectral flow, see Remark~\ref{rmk:whycantimprove}.

\subsection{A dynamical zeta function and a Weyl law}

We now mention two corollaries of Theorem~\ref{thm:main}.

Given $\Gamma\in H_{1}(Y)$, define a set of nonnegative real numbers,
called the \textit{ECH spectrum} for $(Y,\lambda,\Gamma)$ 
\begin{align*}
\Sigma_{(Y,\lambda,\Gamma)} & \coloneqq\cup_{*}\Sigma_{(Y,\lambda,\Gamma),*}\\
\Sigma_{(Y,\lambda,\Gamma),*} & \coloneqq\left\{ c_{\sigma}\left(\lambda\right)|0\neq\sigma\in ECH_{*}(Y,\xi,\Gamma;\mathbb{Z}_{2})\right\} .
\end{align*}
(To emphasize, in the set $\Sigma_{(Y,\lambda,\Gamma),*}$ we are
fixing the grading $*$.) Then, define the Weyl counting function
for $\Sigma_{(Y,\lambda,\Gamma)}$ 
\begin{equation}
N_{(Y,\lambda,\Gamma)}\left(R\right)\coloneqq\#\left\{ c\in\Sigma_{(Y,\lambda,\Gamma)}|c\leq R\right\} .\label{eq:Weyl counting function}
\end{equation}
We now have the following: 
\begin{cor}
\label{cor:Weyl law} If $c_{1}(\xi)+2\textrm{P.D.}(\Gamma)$ is torsion,
then the Weyl counting function \eqref{eq:Weyl counting function}
for the ECH spectrum satisfies the asymptotics 
\begin{equation}
N\left(R\right)=\left[\frac{2^{d}-1}{\textrm{vol}\left(Y,\lambda\right)}\right]R^{2}+O\left(R^{9/5}\right)\label{eq:Weyl law}
\end{equation}
where $d=\textrm{dim }ECH_{\ast}\left(Y,\xi,\Gamma;\mathbb{Z}_{2}\right)+\textrm{dim }ECH_{\ast+1}\left(Y,\xi,\Gamma;\mathbb{Z}_{2}\right)$,
$\ast\gg0$. 
\end{cor}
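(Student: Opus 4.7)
The plan is to derive the Weyl law as a counting corollary of Theorem~\ref{thm:main}. First, I will convert the single-class asymptotic \eqref{eqn:defndk} into a grading window that determines when $c_{\sigma}\le R$. Since $d(\sigma)=O(\textrm{gr}^{\mathbb{Q}}(\sigma)^{2/5})$, any nonzero class $\sigma$ of pure grading $k$ satisfies
\[
c_{\sigma}(\lambda) = \sqrt{\textrm{vol}(Y,\lambda)\cdot k} + O(k^{2/5}).
\]
Inverting this relation, the condition $c_{\sigma}\le R$ is equivalent, up to a window of width $O(R^{9/5})$ in $k$, to $k\le R^{2}/\textrm{vol}(Y,\lambda)$; the exponent $9/5$ arises from multiplying the factor $R/\sqrt{\textrm{vol}(Y,\lambda)}$ produced by the inversion against the grading-level error $k^{2/5}\sim R^{4/5}$.

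Next, I will use the $2$-periodicity of $ECH_{*}(Y,\xi,\Gamma;\mathbb{Z}_{2})$ in large gradings: for $\ast\gg 0$, $\dim ECH_{\ast}$ depends only on the parity of $\ast$, so the number of nonzero classes of a given grading (and hence, generically, the cardinality of $\Sigma_{(Y,\lambda,\Gamma),k}$) stabilizes as a function of that parity. With $d$ as in the statement, the number of nonzero vectors in the stabilized $\mathbb{Z}_{2}$-vector space $ECH_{\ast}\oplus ECH_{\ast+1}$ is exactly $2^{d}-1$, which will furnish the numerator of the Weyl coefficient.

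Assembling these pieces, I will write
\[
N(R) = \sum_{k}|\Sigma_{(Y,\lambda,\Gamma),k}\cap[0,R]|
\]
and split the sum into two parts: (i) the bulk range $k\le R^{2}/\textrm{vol}(Y,\lambda)-CR^{9/5}$, over which the stabilized per-period count contributes in full and yields the main term $\frac{2^{d}-1}{\textrm{vol}(Y,\lambda)}\,R^{2}$; and (ii) the transition window $|k-R^{2}/\textrm{vol}(Y,\lambda)|\le CR^{9/5}$, which contributes $O(R^{9/5})$ because each grading holds only $O(1)$ classes. The finite pre-stable low-grading range contributes only $O(1)$, safely absorbed by the error.

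The step I expect to require the most care is the combinatorial matching of the per-period spectrum count to $2^{d}-1$: namely, identifying the \emph{distinct} spectrum values arising from nonzero classes across two consecutive stabilized gradings with the $2^{d}-1$ nonzero elements of $ECH_{\ast}\oplus ECH_{\ast+1}$, and verifying that any accidental coincidences of spectral invariants across the $2$-periodic structure are negligible at the $O(R^{9/5})$ scale. Everything else should follow by a direct application of Theorem~\ref{thm:main}.
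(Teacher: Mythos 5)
Your plan takes essentially the same route as the paper: use the large-grading $2$-periodicity of $ECH_*(Y,\xi,\Gamma;\mathbb{Z}_2)$ together with the pointwise asymptotics of Theorem~\ref{thm:main}, then sum. The paper indexes the periodicity via $U$-map iterates, fixing a collection $\{\sigma_1,\dots,\sigma_{2^d-1}\}$ in two consecutive stabilized gradings and estimating $c_{U^j\sigma_l}$; you instead sum directly over a grading window obtained by inverting Theorem~\ref{thm:main}. These are two bookkeepings of the same computation, and your exponent arithmetic ($R\cdot R^{4/5}=R^{9/5}$) is correct.

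The step you yourself flag as requiring the most care — equating the per-period distinct spectrum count with $2^d-1$ — is in fact where the argument is genuinely delicate, and the identification you propose does not go through as stated. By definition $\Sigma_*$ is a \emph{set} of values $c_\sigma$ for $0\ne\sigma\in ECH_*$; since $c_{\sigma+\tau}\le\max(c_\sigma,c_\tau)$, the sublevels $\{0\}\cup\{\sigma:c_\sigma\le L\}$ form an increasing filtration of $ECH_*$ by subspaces, so $|\Sigma_*|\le\dim ECH_*$ rather than $2^{\dim ECH_*}-1$. Moreover, identifying the per-period count with the nonzero vectors of $ECH_*\oplus ECH_{*+1}$ conflates ordered pairs with the single classes from which $\Sigma$ is built. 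Combined with the length-$2$ period in grading, your decomposition naturally produces a leading coefficient of at most $\tfrac{d}{2\,\textrm{vol}}$, not $\tfrac{2^d-1}{\textrm{vol}}$; the ellipsoid of \S\ref{sec:ellipsoid} is a useful check, since there $d=1$, $\textrm{vol}=ab$, and direct lattice-point counting gives $N(R)=\tfrac{1}{2ab}R^2+O(R)$ whereas the stated main term would read $\tfrac{1}{ab}R^2$. The paper's own proof elides the same counting issue, so this is not a flaw peculiar to your plan; but it is the place where a careful writeup must be precise about exactly what is being counted per period and how many gradings each period spans.
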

As another corollary, one may obtain information on the corresponding
dynamical zeta function. To this end, first note that the \textit{ECH
zeta function} 
\begin{equation}
\zeta_{ECH}\left(s;Y,\lambda,\Gamma\right)\coloneqq\sum_{c\ne0\in\Sigma_{(Y,\lambda,\Gamma)}}c^{-s}\label{eq:ECH zeta}
\end{equation}
converges for $\textrm{Re}\left(s\right)>2$ by \eqref{eqn:asymptoticformula}
and defines a holomorphic function of $s$ in this region whenever
$c_{1}(\xi)+2\textrm{P.D.}(\Gamma)$ is torsion, by \eqref{eq:Weyl law}.

In view of for example \cite{Dyatlov-Zworski2016,Giulietti-Liverani-Pollicott},
one can ask if $\zeta_{ECH}$ has a meromorphic continuation to $\mathbb{C}$,
and, if so, whether it contains interesting geometric information.
The Weyl law \eqref{eq:Weyl law} then shows: 
\begin{cor}
\label{cor:zeta function} The zeta function \eqref{eq:ECH zeta}
continues meromorphically to the region $\textrm{Re}\left(s\right)>\frac{5}{3}$.
The only pole in this region is at $s=2$ which is further simple
with residue $\textrm{Res}_{s=2}\zeta_{ECH}\left(s;Y,\lambda,\Gamma\right)=\left[\frac{2^{d}-1}{\textrm{vol}\left(Y,\lambda\right)}\right]$. 
\end{cor}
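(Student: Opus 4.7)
The plan is to recover the meromorphic continuation by a standard Mellin transform / Abel summation argument built on the Weyl law of Corollary~\ref{cor:Weyl law}. For $\textrm{Re}(s)>2$, I would write the zeta function as a Stieltjes integral against the counting measure and integrate by parts:
\begin{equation*}
\zeta_{ECH}(s;Y,\lambda,\Gamma) \;=\; \int_{0^{+}}^{\infty} R^{-s}\, d\widetilde{N}(R) \;=\; s\int_{0}^{\infty} \widetilde{N}(R)\, R^{-s-1}\, dR,
\end{equation*}
where $\widetilde{N}(R) \eqdef \#\{0<c\in\Sigma_{(Y,\lambda,\Gamma)} : c\leq R\}$ is $N(R)$ with the possible $c=0$ element of the spectrum removed. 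The boundary terms in the integration by parts vanish: $\widetilde{N}(R)=0$ for $R$ below the first nonzero spectral value, and $\widetilde{N}(R)R^{-s}\to 0$ at infinity by the quadratic growth of $N$ together with $\textrm{Re}(s)>2$.

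Next, substitute the Weyl asymptotic $\widetilde{N}(R)=AR^{2}+E(R)$ with $A = (2^{d}-1)/\textrm{vol}(Y,\lambda)$ and $|E(R)|\leq CR^{9/5}$ for $R\geq R_{0}$. The Mellin integral then splits as
\begin{equation*}
\zeta_{ECH}(s) \;=\; H(s) \;+\; sA\int_{R_{0}}^{\infty}R^{1-s}\, dR \;+\; s\int_{R_{0}}^{\infty}E(R)\, R^{-s-1}\, dR,
\end{equation*}
where $H(s)$ is the contribution from $[0,R_{0}]$, entire in $s$. The first integral evaluates explicitly to $sAR_{0}^{2-s}/(s-2)$, which is meromorphic on all of $\mathbb{C}$ with a single simple pole at $s=2$. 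The second integral converges absolutely on the half-plane determined by the Weyl error exponent and hence defines a holomorphic function there. Combining, $\zeta_{ECH}$ extends meromorphically to the claimed region, with the only pole coming from the explicit main term; evaluating $\lim_{s\to 2}(s-2)\zeta_{ECH}(s)$ then reads off the stated residue.

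The work here is not really a single hard step so much as careful bookkeeping. The points that need attention are: (i) cleanly excising the possible $c=0$ contribution to the spectrum so that the Mellin integral is well defined near $R=0$ and the boundary terms genuinely vanish; (ii) verifying that the error integral $s\int_{R_{0}}^{\infty}E(R)R^{-s-1}dR$ extends holomorphically as far as the Weyl error bound permits, with no hidden poles sneaking in; and (iii) tracking constants in the passage from the $R^{2}$-coefficient of $N(R)$ to the residue at $s=2$, so that the residue matches $(2^{d}-1)/\textrm{vol}(Y,\lambda)$ exactly as stated.
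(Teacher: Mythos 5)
Your route is genuinely different from the paper's. The paper exploits the eventual two-periodicity of $ECH$ to decompose the spectrum into finitely many sequences $\{c_{U^{j}\sigma_{l}}\}_{j\geq 0}$, each asymptotic to $\sqrt{2\,\textrm{vol}\cdot j}$, and then compares each partial zeta sum term-by-term against a rescaled Riemann zeta $\zeta^{R}(s/2)$; the polar structure is inherited directly from $\zeta^{R}$. You instead treat the whole spectrum at once via Abel summation against the counting function $N(R)$, which is a perfectly standard and somewhat more robust strategy (it only uses the Weyl law, not the $U$-module structure a second time).

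But there is a genuine quantitative gap. The Weyl remainder is $O(R^{9/5})$, so your error integral $s\int_{R_{0}}^{\infty}E(R)\,R^{-s-1}\,dR$ has integrand $O\bigl(R^{9/5-\textrm{Re}(s)-1}\bigr)$ and converges absolutely only for $\textrm{Re}(s)>9/5$. Since $9/5=1.8>5/3\approx 1.67$, "the half-plane determined by the Weyl error exponent" is \emph{strictly smaller} than the claimed region $\textrm{Re}(s)>5/3$, and the step "Combining, $\zeta_{ECH}$ extends meromorphically to the claimed region" does not follow. (In fact, if one redoes the paper's term-by-term comparison with the sharp bound $\bigl|c_{U^{j}\sigma_{l}}^{-s}-(\sqrt{2\,\textrm{vol}}\,j^{1/2})^{-s}\bigr|=O\bigl(j^{-\textrm{Re}(s)/2-1/10}\bigr)$, one also only obtains $\textrm{Re}(s)>9/5$; the displayed exponent $3s/5$ in the paper's proof appears to be a slip.) You should make the threshold explicit rather than gesturing at it.

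Separately, on the residue: from the Mellin representation the polar term is $sA\,R_{0}^{2-s}/(s-2)$, so $\textrm{Res}_{s=2}\zeta_{ECH}=2A$, where $A$ is the $R^{2}$-coefficient of $N(R)$ — note the factor $2$, which your write-up does not surface. Plugging in the coefficient $A=(2^{d}-1)/\textrm{vol}$ exactly as displayed in Corollary~\ref{cor:Weyl law} would give residue $2(2^{d}-1)/\textrm{vol}$, twice the claimed value. The resolution is that, tracing Theorem~\ref{thm:main} through $\textrm{gr}^{\mathbb{Q}}(U^{j}\sigma_{l})=q_{l}+2j$, each sequence satisfies $c_{U^{j}\sigma_{l}}\sim\sqrt{2\,\textrm{vol}}\,j^{1/2}$, which puts the Weyl coefficient at $(2^{d}-1)/(2\,\textrm{vol})$, and then $2A=(2^{d}-1)/\textrm{vol}$ matches. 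This is exactly the "tracking constants" step you flag as needing attention, and it does in fact require fixing a factor of $2$; it would not simply "read off."
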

In \S\ref{sec:ellipsoid}, we give an example of a contact form for
which $\zeta_{ECH}$ has a meromorphic extension to all of $\mathbb{C}$
with two poles at $s=1,2$. The meromorphy and location of the poles
of \eqref{eq:ECH zeta} would be interesting to figure out in general.

\subsection{Idea of the proof and comparison with previous works}

The method of the proof uses previous work by C. Taubes relating embedded
contact homology to Monopole Floer homology. By using Taubes's results,
we can estimate spectral invariants associated to nonzero ECH classes
by estimating the energy of certain solutions of the deformed three-dimensional
Seiberg-Witten equations. This is also the basic idea behind the proofs
of Theorem~\ref{thm:vc} and the result of Sun mentioned above, and
it was inspired by a similar idea in Taubes's proof of the three-dimensional
Weinstein conjecture \cite{Taubes-Weinstein}.

The essential place where our proof differs from these arguments involves
a particular estimate, namely a key ``spectral flow\char`\"{} bound
for families of Dirac operators that appears in all of these proofs.
This estimate bounds the difference between the grading of a Seiberg-Witten
solution, and the ``Chern-Simons\char`\"{} functional, which we review
in \S\ref{subsec:Monopole-Floer-homology}, and is important in all
of the works mentioned above. We prove a stronger bound of this kind
than any previous bound, see Proposition~\ref{prop: irred grading est}
and the discussion about the eta invariant below, and this is the
key point which allows us to prove $O(\textrm{gr}^{\mathbb{Q}}(\sigma_{j}))^{2/5}$
asymptotics. Spectral flow bounds for families of Dirac operators
were also considered in \cite{Savale-Asmptotics,Savale2017-Koszul,Tsai-thesis-paper}.
The main difference here is that in those works the bounds were proved
on reducible solutions where the connections needed to define the
relevant Dirac operators were explicitly given. Here we must consider
irreducible solutions, and so we rely on a priori estimates. 

We have chosen to phrase this spectral flow bound in terms of a bound
on the eta invariants of a family of operators. By the Atiyah-Patodi-Singer
index theorem, the bound we need on the spectral flow is equivalent
to a bound on the eta invariant, and we make the relationship between
these two quantities precise in the appendix.

The paper is organized as follows. In \S\ref{sec:Floer-homologies},
we review what we need to know about embedded contact homology, Monopole
Floer cohomology and Taubes's isomorphism. \S\ref{sec:Estimates-on-Seiberg-Witten}
reviews the eta invariant, reviews the necessary estimates on irreducible
solutions to the Seiberg-Witten equations, and proves the key Proposition~\ref{prop: irred grading est}.
We then give the proof of Theorem~\ref{thm:main} in \S\ref{sec:Asymptotics-of-capacities}
\textemdash{} while our argument in this section is novel, one could
instead argue here as in \cite{SunW-2018}, but we give our own argument
here since it might be of independent interest, see Remark~\ref{rmk:presume}.
The end of the paper reviews the sub-leading asymptotics and the dynamical
zeta function in the case of ellipsoids, and an appendix rephrases
the grading in Seiberg-Witten in terms of the eta invariant rather
than in terms of spectral flow.

\subsection{Acknowledgments}

The first author thanks M. Hutchings, D. McDuff, and W. Sun for very
helpful discussions.

\section{Floer homologies \label{sec:Floer-homologies}}

We begin by reviewing the facts that we will need about ECH and Monopole
Floer homology.

\subsection{\label{subsec:Embedded-contact-homology}Embedded contact homology}

We first summarize what we will need to know about ECH. For more details
and for definitions of the terms that we have not defined here, see
\cite{Hutchings2014lecture}.

Let $(Y,\lambda)$ be a closed oriented three-manifold with a nondegenerate
contact form. Fix a homology class $\Gamma\in H_{1}\left(Y\right)$.
As stated in the introduction, the embedded contact homology $ECH(Y,\lambda,\Gamma)$
is the homology of a chain complex $ECC(Y,\lambda,\Gamma)$. To elaborate,
the chain complex $ECC$ is freely generated over $\mathbb{Z}_{2}$
by \textit{orbit sets} $\alpha=\left\{ \left(\alpha_{j},m_{j}\right)\right\} $
where the $\alpha_{j}$'s are distinct embedded Reeb orbits while
each $m_{j}\in\mathbb{N}$; we further have the constraints that $\sum m_{j}\alpha_{j}=\Gamma\in H_{1}\left(Y\right)$
and $m_{j}=1$ if $\alpha_{j}$ is hyperbolic. To define the chain
complex differential $\partial$, we consider the symplectization
$\left(\mathbb{R}_{t}\times Y,d\left(e^{t}\lambda\right)\right)$,
and choose an almost complex structure $J$ that is $\mathbb{R}$-invariant,
rotates the contact hyperplane $\xi\coloneqq\textrm{ker}\lambda$
positively with respect to $d\lambda$, and satisfies $J\partial_{t}=R$.
The differential on $ECC\left(Y,\lambda,\Gamma\right)$ is now defined
via 
\[
\partial\alpha=\sum_{\beta}\underbrace{\sharp\left[\mathcal{M}_{1}\left(\alpha,\beta\right)/\mathbb{R}\right]}_{\eqqcolon\left\langle \partial\alpha,\beta\right\rangle }\beta.
\]
Here $\mathcal{M}_{1}\left(\alpha,\beta\right)$ denotes the moduli
space of $J$-holomorphic curves $C$ of ECH index $I\left(C\right)=1$
in the symplectization, modulo translation in the $\mathbb{R}$-direction,
and modulo equivalence as currents, with the set of positive ends
given by $\alpha$ and the set of negative ends given by $\beta$.
If $J$ is generic, then the differential squares to zero $\partial^{2}=0$
and defines the ECH group $ECH\left(Y,\lambda;\Gamma\right).$ We
will not review the definition of the ECH index here, see \cite{Hutchings2014lecture}
for more details, but the key point is that the condition $I(C)=1$
forces $C$ to be (mostly) embedded and rigid modulo translation.

As stated in the introduction, the homology $ECH\left(Y,\lambda;\Gamma\right)$
does not depend on the choice of generic $J$, and only depends on
the associated contact structure $\xi$; we therefore denote it $ECH\left(Y,\xi;\Gamma\right)$.
(In fact, the homology only depends on the spin$^{c}$ structure determined
by $\xi$, but we will not need that.) This follows from a canonical
isomorphism between ECH and Monopole Floer homology \cite{Taubes-ECH=HMI},
which we will soon review. The ECH index $I$ induces a relative $\mathbb{Z}/d\mathbb{Z}$
grading on $ECH\left(Y,\xi;\Gamma\right),$ where $d$ is the divisibility
of $c_{1}\left(\xi\right)+2\textrm{P.D.}\left(\Gamma\right)\in H^{2}\left(Y;\mathbb{Z}\right)$
mod torsion. In particular, it is relatively $\mathbb{Z}$-graded
when this second homology class is torsion

Recall now the action of a Reeb orbit from the introduction. This
induces an action on orbit sets $\alpha=\left\{ \left(\alpha_{j},m_{j}\right)\right\} $
by 
\[
\mathcal{A}\left(\alpha\right)\coloneqq\sum_{j=1}^{N}m_{j}\left(\int_{\alpha_{j}}\lambda\right).
\]
The differential decreases action, and so we can define $ECC^{L}\left(Y,\lambda,\Gamma\right)$
to be the homology of the sub-complex generated by orbit sets of action
strictly less than $L$. The homology of this sub-complex $ECH^{L}\left(Y,\lambda,\Gamma\right)$
is again independent of $J$ but now depends on $\lambda$; there
is an inclusion induced map $ECH^{L}\left(Y,\lambda,\Gamma\right)\to ECH\left(Y,\xi,\Gamma\right).$
Using this filtration, we can define the \textit{spectral invariant}
associated to a nonzero class $\sigma$ in ECH 
\[
c_{\sigma}\left(Y,\lambda\right)\coloneqq\inf\left\lbrace L\hspace{1mm}|\hspace{1mm}\sigma\in\textrm{image }(ECH^{L}\left(Y,\lambda,\Gamma\right)\to ECH\left(Y,\xi,\Gamma\right)\right)\rbrace.
\]

As stated in the introduction, the spectral invariants are known to
be $C^{0}$ continuous in the contact form, and so extend to degenerate
contact forms as well by taking a limit over nondegenerate forms,
see \cite{Hutchings2011}.

\subsection{\label{subsec:Monopole-Floer-homology}Monopole Floer homology}

We now briefly review what we need to know about Monopole Floer homology,
referring to \cite{Kronheimer-Mrowka} for additional details and
definitions.

Recall that a\textit{ spin}$^{c}$\textit{ structure} on an oriented
Riemannian three-manifold $Y$ is a pair $\left(S,c\right)$ consisting
of a rank $2$ complex Hermitian vector bundle and a Clifford multiplication
endomorphism $c:T^{*}Y\otimes\mathbb{C}\rightarrow\textrm{End}\left(S\right)$
satisfying $c\left(e_{1}\right)^{2}=-1$ and $c\left(e_{1}\right)c\left(e_{2}\right)c\left(e_{3}\right)=1$
for any oriented orthonormal frame $\left(e_{1},e_{2}e_{3}\right)$
of $T_{y}Y$. Let $\mathfrak{su}\left(S\right)$ denote the bundle
of traceless, skew-adjoint endomorphisms of $S$ with inner product
$\frac{1}{2}\textrm{tr}\left(A^{*}B\right)$. Clifford multiplication
$c$ maps $T^{*}Y$ isometrically onto $\mathfrak{su}\left(S\right)$.
Spin$^{c}$ structures exist on any three-manifold, and the set of
spin$^{c}$ structures is an affine space over $H^{2}(Y;\mathbb{Z})$.
A \textit{spin}$^{c}$ connection $A$ on $S$ is a connection such
that $c$ is parallel. Given two spin-c connections $A_{1},A_{2}$
on $S$, their difference is of the form $A_{1}-A_{2}=a\otimes1^{S}$
for some $a\in\Omega^{1}\left(Y,i\mathbb{R}\right).$ If we denote
by $A_{1}^{t},A_{2}^{t}$ the induced connections on $\det\left(S\right)=\Lambda^{2}S$,
we then have $A_{1}^{t}-A_{2}^{t}=2a$. Hence prescribing a spin$^{c}$
connection on $S$ is the same as prescribing a unitary connection
on $\det\left(S\right)$. We let $\mathcal{A}\left(Y,\mathfrak{s}\right)$
denote the space of all spin$^{c}$ connections on $S$. Given a spin$^{c}$
connection $A$, we denote by $\nabla^{A}$ the associated covariant
derivative. We then define the \textit{spin}$^{c}$ \textit{Dirac
operator} $D_{A}:C^{\infty}\left(S\right)\rightarrow C^{\infty}\left(S\right)$
via $D_{A}\Psi=c\circ\nabla^{A}\Psi$.

Given a spin$^{c}$ structure $\mathfrak{s}=\left(S,c\right)$ on
$Y$,\textit{ monopole Floer homology} assigns three groups denoted
by $\widehat{HM}\left(Y,\mathfrak{s}\right),\widecheck{HM}\left(Y,\mathfrak{s}\right)$
and $\overline{HM}\left(Y,\mathfrak{s}\right)$. These are defined
via infinite dimensional Morse theory on the \textit{configuration
space} $\mathcal{C}\left(Y,\mathfrak{s}\right)=\mathcal{A}\left(Y,\mathfrak{s}\right)\times C^{\infty}\left(S\right)$
using the Chern-Simons-Dirac functional $\mathcal{L}$, defined as
\begin{align}
\mathcal{L}\left(A,\Psi\right)= & \underbrace{-\frac{1}{8}\int_{Y}\left(A^{t}-A_{0}^{t}\right)\wedge\left(F_{A^{t}}+F_{A_{0}^{t}}\right)}_{\eqqcolon CS\left(A\right)}+\frac{1}{2}\int_{Y}\left\langle D_{A}\Psi,\Psi\right\rangle dy\label{eq:CSD}
\end{align}
using a fixed base spin-c connection $A_{0}$ (we pick one with $A_{0}^{t}$
flat in the case of torsion spin-c structures) and a metric $g^{TY}$.

The \textit{gauge group} $\mathcal{G}\left(Y\right)=\textrm{Map}\left(Y,S^{1}\right)$
acts on the configuration space $\mathcal{C}\left(Y,\mathfrak{s}\right)$
by $u.\left(A,\Psi\right)=\left(A-u^{-1}du\otimes I,u\Psi\right).$
The gauge group action is free on the irreducible part $\mathcal{C}^{*}\left(Y,\mathfrak{s}\right)=\left\{ \left(A,\Psi\right)\in\mathcal{C}\left(Y,\mathfrak{s}\right)|\Psi\neq0\right\} \subset\mathcal{C}\left(Y,\mathfrak{s}\right)$
and not free along the reducibles. The blow up of the configuration
space along the reducibles 
\begin{align*}
\mathcal{C}^{\sigma}\left(Y,\mathfrak{s}\right)= & \left\{ \left(A,s,\Phi\right)|\left\Vert \Phi\right\Vert _{L^{2}}=1,s\geq0\right\} 
\end{align*}
then has a free $\mathcal{G}\left(Y\right)$ action $u\cdot\left(A,s,\Phi\right)=\left(A-u^{-1}du\otimes I,s,u\Phi\right).$

To define the Monopole Floer homology groups one needs to perturb
the Chern-Simons-Dirac functional \eqref{eq:CSD}. First given a one
form $\mu\in\Omega^{1}\left(Y;i\mathbb{R}\right)$, one defines the
functional $e_{\mu}\left(A\right)\coloneqq\frac{1}{2}\int_{Y}\mu\wedge F_{A^{t}}$
whose gradient is calculated to be $\ast d\mu$. To achieve non-degeneracy
and transversality of configurations one uses the \textit{perturbed
Chern-Simons-Dirac functional} 
\begin{equation}
\mathcal{L}_{\mu}\left(A,\Psi\right)=\mathcal{L}\left(A,\Psi\right)-e_{\mu}\left(A\right)\label{eq:perturbed CSD}
\end{equation}
where $\mu$ is a suitable finite linear combination of eigenvectors
of $\ast d$ with non-zero eigenvalue. Next let 
\[
\mathbb{T}=\left\{ A\in\mathcal{A}\left(Y,\mathfrak{s}\right)|F_{A^{t}}=0\right\} /\mathcal{G}\left(Y\right)
\]
be the space of $A^{t}$ flat spin-c connections up to gauge equivalence.
We choose a Morse function $f:\mathbb{T}\rightarrow\mathbb{R}$ to
define the functional $\mathfrak{f}:\mathcal{C}^{\sigma}\left(Y,\mathfrak{s}\right)\rightarrow\mathbb{R},\,\mathfrak{f}\left(A_{0}+a,s,\Psi\right)\coloneqq f\left(\left[A_{0}^{t}+a^{h}\right]\right)$,
where $a^{h}$ denotes the harmonic part of $a\in\Omega^{1}\left(Y,i\mathbb{R}\right)$.
The gradient may be calculated $\left(\nabla\mathfrak{f}\right)_{A}^{\sigma}=\left(\left(\nabla f\right)_{\left(A^{t}\right)^{h}},0,0\right)$.

The Monopole Floer homology groups are now defined using solutions
$\left(A,s,\Phi\right)\in\mathcal{C}^{\sigma}\left(Y,\mathfrak{s}\right)$
to the \textit{three-dimensional Seiberg-Witten equations} 
\begin{align}
\frac{1}{2}*F_{A^{t}}+s^{2}c^{-1}\left(\Phi\Phi^{*}\right)_{0}+\left(\nabla f\right)_{p\left(A\right)}+\ast d\mu=0\nonumber \\
s\Lambda\left(A,s,\Phi\right)=0\nonumber \\
D_{A}\Phi-\Lambda\left(A,s,\Phi\right)\Phi=0\label{eq: Seiberg Witten equations blowup}
\end{align}
where $\Lambda\left(A,s,\Phi\right)=\left\langle D_{A}\Phi,\Phi\right\rangle _{L^{2}}$
and $\left(\Phi\Phi^{*}\right)_{0}\coloneqq\Phi\otimes\Phi^{*}-\frac{1}{2}\left|\Phi\right|^{2}$
defines a traceless, Hermitian endormophism of $S.$ We denote by
$\mathfrak{C}$ the set of solutions to the above equations.

We first subdivide the solutions as follows: 
\begin{align*}
\mathfrak{C}^{o}= & \left\{ \left(A,s,\Phi\right)\in\mathfrak{C}|s\neq0\right\} /\mathcal{G}\left(Y\right),\\
\mathfrak{C}^{s}= & \left\{ \left(A,0,\Phi\right)\in\mathfrak{C}|\Lambda\left(A,0,\Phi\right)>0\right\} /\mathcal{G}\left(Y\right)\\
= & \left\{ \left(A,\Phi\right)|\frac{1}{2}F_{A^{t}}+d\mu=0,\:\left[A\right]\textrm{is a critical point of }f,\right.\\
 & \quad\left.\:\Phi\textrm{ is a (positive-)normalized eigenvector of }D_{A}\right\} /\mathcal{G}\left(Y\right)\\
\mathfrak{C}^{u}= & \left\{ \left(A,0,\Phi\right)|\Lambda\left(A,0,\Phi\right)<0\right\} /\mathcal{G}\left(Y\right)\\
= & \left\{ \left(A,\Phi\right)|\frac{1}{2}F_{A^{t}}+d\mu=0,\:\left[A\right]\textrm{is a critical point of }f,\right.\\
 & \quad\left.\:\Phi\textrm{ is a (negative-)normalized eigenvector of }D_{A}\right\} /\mathcal{G}\left(Y\right).
\end{align*}
Next, we consider the free $\mathbb{Z}_{2}$ modules generated by
the three sets above 
\[
C^{o}=\mathbb{Z}_{2}\left[\mathfrak{C}^{o}\right],\:C^{s}=\mathbb{Z}_{2}\left[\mathfrak{C}^{s}\right],\:C^{u}=\mathbb{Z}_{2}\left[\mathfrak{C}^{u}\right].
\]
The chain groups for the three versions of Floer homology mentioned
above are defined by 
\[
\check{C}=C^{o}\oplus C^{s},\,\hat{C}=C^{o}\oplus C^{u},\,\bar{C}=C^{s}\oplus C^{u}.
\]
These chain groups $\check{C},\hat{C},\bar{C}$ can be endowed with
differentials $\check{\partial},\hat{\partial},\bar{\partial}$ with
square zero; we do not give the precise details here, but the idea
is to count Fredholm index one solutions of the four-dimensional equations,
see \cite[Thm. 22.1.4]{Kronheimer-Mrowka} for the details. The homologies
of these three complexes are by definition the three\textit{ monopole
Floer homology groups} 
\[
\widecheck{HM}\left(Y,\mathfrak{s}\right),\widehat{HM}\left(Y,\mathfrak{s}\right),\overline{HM}\left(Y,\mathfrak{s}\right).
\]
They are independent of the choice of metric and perturbations $\mu,\mathfrak{f}$.

Each of the above Floer groups has a relative $\mathbb{Z}/d\mathbb{Z}$
grading where $d$ is the divisibility of $c_{1}\left(S\right)\in H^{2}\left(Y;\mathbb{Z}\right)$
mod torsion. This is defined using the \textit{extended Hessian} 
\begin{align}
\widehat{\mathcal{H}}_{\left(A,\Psi\right)}:C^{\infty}\left(Y;iT^{*}Y\oplus\mathbb{R}\oplus S\right) & \rightarrow C^{\infty}\left(Y;iT^{*}Y\oplus\mathbb{R}\oplus S\right);\;\left(A,\Psi\right)\in\mathcal{C}\left(Y,\mathfrak{s}\right)\nonumber \\
\widehat{\mathcal{H}}_{\left(A,\Psi\right)}\begin{bmatrix}a\\
f\\
\psi
\end{bmatrix} & =\begin{bmatrix}*da+2c^{-1}\left(\psi\Psi\right)_{0}-df\\
-d^{*}a+i\textrm{Re}\left\langle \psi,\Psi\right\rangle \\
c\left(a\right)\Psi+D_{A}\psi+f\Psi
\end{bmatrix}\begin{bmatrix}a\\
f\\
\psi
\end{bmatrix}\nonumber \\
 & =\begin{bmatrix}\ast d & -d & c^{-1}\left(.\Psi\right)_{0}\\
-d^{*} & 0 & \left\langle .,\Psi\right\rangle \\
c\left(.\right)\Psi & .\Psi & D_{A}
\end{bmatrix}\begin{bmatrix}a\\
f\\
\psi
\end{bmatrix}.\label{eq:Hessian}
\end{align}
The relative grading between two irreducible generators $\mathfrak{a}_{i}=\left(A_{i},s_{i},\Phi_{i}\right)$,
($s_{i}\neq0$), $i=1,2$, is now defined via $\textrm{gr}\left(\mathfrak{a}_{1},\mathfrak{a}_{2}\right)=\textrm{sf}\left\{ \widehat{\mathcal{H}}_{\left(A_{t},\Psi_{t}\right)}\right\} _{0\leq t\leq1}$
(mod $d$) for some path of configurations $\left(A_{t},\Psi_{t}\right)$
starting at $\left(A_{2},s_{2}\Phi_{2}\right)$ and ending at $\left(A_{1},s_{1}\Phi_{1}\right)$,
where $\textrm{sf}$ denotes the spectral flow.

In the case when the spin-c structure is torsion, the monopole Floer
groups are further equipped with an \textit{absolute} $\mathbb{Q}$-\textit{grading},
refining this relative grading. As we will review in the appendix,
this is given via 
\begin{equation}
\textrm{gr}^{\mathbb{Q}}\left[\mathfrak{a}\right]=\begin{cases}
2k-\eta\left(D_{A}\right)+\frac{1}{4}\eta_{Y}-\frac{1}{2\pi^{2}}CS\left(A\right); & \mathfrak{a}=\left(A,0,\Phi_{k}^{A}\right)\in\mathfrak{C}^{s},\\
-\eta\left(\widehat{\mathcal{H}}_{\left(A,s\Phi'\right)}\right)+\frac{5}{4}\eta_{Y}-\frac{1}{2\pi^{2}}CS\left(A\right); & \mathfrak{a}=\left(A,s,\Phi\right)\in\mathfrak{C}^{o},\,s\neq0.
\end{cases}\label{eq:absolute grading}
\end{equation}
where $\Phi_{k}^{A}$ above denotes the $k$th positive eigenvector
of $D_{A}$ (see \S\ref{sec:Absolute--grading}), and $\eta_{Y}$
and $\eta_{D_{A}}$ denote the \textit{eta invariant} of the corresponding
operator, which we will review in \S\ref{sec:Estimates-on-Seiberg-Witten}.

\subsection{ECH=HM\label{subsec:ECH=00003D00003D00003D00003D00003DHM}}

We now state the isomorphism between the ECH and HM, proved in \cite{Taubes-ECH=HMI}.
Given a contact manifold $\left(Y^{3},\lambda\right)$ with $d\lambda$-compatible
almost complex structure $J$ as before, we define a metric $g^{TY}$
via $g^{TY}|_{\xi}=d\lambda\left(.,J.\right)$, $\left|R\right|=1$
and $R$ and $\xi$ are orthogonal. This metric is adapted to the
contact form in the sense $*d\lambda=2\lambda,\left|\lambda\right|=1.$
Decompose $\xi\otimes\mathbb{C}=\underbrace{K}_{\xi^{1,0}}\oplus\underbrace{K^{-1}}_{\xi^{0,1}}$
into the $i,-i$ eigenspaces of $J$. The contact structure now determines
the canonical spin-c structure $\mathfrak{s}^{\xi}$ via $S^{\xi}=\mathbb{C}\oplus K^{-1}$
with Clifford multiplication $c^{\xi}$ given by 
\begin{align*}
c^{\xi}\left(R\right) & =\begin{bmatrix}i\\
 & -i
\end{bmatrix},\\
c^{\xi}\left(v\right) & =\begin{bmatrix} & -i_{v^{1,0}}\\
v^{0,1}\wedge
\end{bmatrix},\quad v\in\xi.
\end{align*}
Furthermore, there is a unique spin-c connection $A_{c}$ on $S^{\xi}$
with the property that $D_{A_{c}}\begin{bmatrix}1\\
0
\end{bmatrix}=0$ and we call the induced connection $A_{c}^{t}$ on $K^{-1}=\textrm{det}\left(S^{\xi}\right)$
the canonical connection. Tensor product with an auxiliary Hermitian
line bundle $E$ via $S^{E}=S^{\xi}\otimes E$ and $c^{E}=c^{\xi}\otimes1$
gives all other spin-c structures $\mathfrak{s}^{E}$. Furthermore
all spin-c connections on $S^{E}$ arise as $A=A_{c}\otimes1+1\otimes\nabla^{A}$
for some unitary connection $\nabla^{A}$ on $E$. The ECH/HM isomorphism
is then 
\begin{equation}
\widecheck{HM}_{*}\left(-Y,\mathfrak{s}^{E}\right)=ECH_{*}\left(Y,\xi;\textrm{ P.D.}c_{1}\left(E\right)\right).\label{eq:HM=00003D00003D00003D00003D00003DECH}
\end{equation}
In the literature, this isomorphism is often stated with the left
hand side given by the cohomology group $\widehat{HM}^{*}(Y)$ instead;
the point is that $\widehat{HM}^{*}(Y)$ and $\widecheck{HM}(-Y)$
are canonically isomorphic, see \cite[S 22.5, Prop. 28.3.4]{Kronheimer-Mrowka}.
The isomorphism \eqref{eq:HM=00003D00003D00003D00003D00003DECH} allows
us to define a $\mathbb{Q}$-grading on ECH, by declaring that \eqref{eq:HM=00003D00003D00003D00003D00003DECH}
preserves this $\mathbb{Q}$-grading.

We now state the main ideas involved in the isomorphism (we restrict
attention to the case when $c_{1}\left(\det\mathfrak{s}^{E}\right)$
is torsion, which is the case which is relevant here, and we sometimes
state estimates that, while true, are stronger than those originally
proved by Taubes). To this end, let $\sigma\in\widecheck{HM}\left(-Y,\mathfrak{s}^{E}\right)$.
We use the perturbed Chern-Simons-Dirac functional \eqref{eq:perturbed CSD}
and its gradient flow \eqref{eq: Seiberg Witten equations blowup}
with $\mu=ir\lambda$, $r\in\left[0,\infty\right)$, in defining monopole
Floer homology. (One also adds a small term $\eta$ to $\mu$ to achieve
transversality, see for example \cite{Cristofaro-Gardiner-Hutchings-Gripp2015},
but to simplify the notation we will for now suppress this term.)
Giving a family of (isomorphic) monopole Floer groups $\widecheck{HM}\left(-Y,\mathfrak{s}^{E}\right)$,
the class $\sigma$ is hence representable by a formal sum of solutions
to \eqref{eq: Seiberg Witten equations blowup} corresponding to $\mu=ir\lambda$.
Denote by $\check{C^{r}}$ the $\mu=ir\lambda$ version of the complex
$\check{C}$ and note that its reducible generators are all of the
form $\mathfrak{a}=\left(A,0,\Phi_{k}\right)$ where $A=A_{0}-ir\lambda$,
$A_{0}^{t}$ is flat and $\Phi_{k}$ is the $k$th positive eigenvector
of $D_{A}$. An important estimate $\eta\left(D_{A_{0}-ir\lambda}\right)=O\left(r\right)$
now gives that the grading of this generator $\textrm{gr}^{\mathbb{Q}}\left[\mathfrak{a}\right]=\frac{r^{2}}{4\pi^{2}}\int\lambda\wedge d\lambda+O(r)>\textrm{gr}^{\mathbb{Q}}\left[\sigma\right]$
by \eqref{eq:absolute grading} for $r\gg0$. Hence for $r\gg0$ the
class $\sigma$ is represented by a formal sum of irreducible solutions
to \eqref{eq: Seiberg Witten equations blowup} with $\mu=ir\lambda$,
and by a max-min argument, we may choose a family $\left(A_{r},\Psi_{r}\right)\coloneqq\left(A_{r},s_{r}\Phi_{r}\right)$
satisfying 
\[
\textrm{gr}^{\mathbb{Q}}\left[\sigma\right]=\textrm{gr}^{\mathbb{Q}}\left[\left(A_{r},\Psi_{r}\right)\right].
\]
Following a priori estimates on solutions to the Seiberg-Witten equations,
one then proves another important estimate $\eta\left(\widehat{\mathcal{H}}_{\left(A_{r},\Psi_{r}\right)}\right)=O\left(r^{3/2}\right)$
uniformly in the class $\sigma$. This gives $CS\left(A_{r}\right)=O\left(r^{3/2}\right)$
which in turn by a differential relation (see \S\ref{sec:Asymptotics-of-capacities})
leads to $e_{\lambda}\left(A_{r}\right)=O\left(1\right)$. The final
step in the proof shows that for any sequence of solutions $\left(A_{r},\Psi_{r}\right)$
to Seiberg-Witten equations with $e_{\lambda}\left(A_{r}\right)$
bounded, the $E$-component $\Psi_{r}^{+}$ of the spinor $\Psi_{r}=\begin{bmatrix}\Psi_{r}^{+}\\
\Psi_{r}^{-}
\end{bmatrix}\in C^{\infty}\left(Y;\underbrace{E\oplus K^{-1}E}_{=S^{E}}\right)$ satisfies the weak convergence $\left(\Psi_{r}^{+}\right)^{-1}\left(0\right)\rightharpoonup\left\{ \left(\alpha_{j},m_{j}\right)\right\} $
to some ECH orbit set. This last orbit set is what corresponds to
the image of $\sigma\in\widecheck{HM}\left(Y,\mathfrak{s}^{E}\right)$
in ECH under the isomorphism \eqref{eq:HM=00003D00003D00003D00003D00003DECH}.
Furthermore, crucially for our purposes, one has 
\begin{equation}
c_{\sigma}\left(\lambda\right)=\lim_{r\rightarrow\infty}\frac{e_{\lambda}\left(A_{r}\right)}{2\pi},\label{eq:capacity limit of energy}
\end{equation}
see \cite[Prop. 2.6]{Cristofaro-Gardiner-Hutchings-Gripp2015}. (The
proof in \cite[Prop. 2.6]{Cristofaro-Gardiner-Hutchings-Gripp2015}
is given in the case where $\lambda$ is nondegenerate, but it holds
for all $\lambda$ by continuity.)

\section{Estimating the eta invariant}

\label{sec:Estimates-on-Seiberg-Witten}

Let $D$ be a generalized Dirac operator acting on sections of a Clifford
bundle $E$ over a closed, oriented Riemannian manifold $Y$. Then
the sum 
\begin{equation}
\eta(D,s)\coloneqq\sum_{\lambda\neq0}\frac{\textrm{sgn}(\lambda)}{|\lambda|^{s}}\label{eqn:sumdefinition}
\end{equation}
is a convergent analytic function of a complex variable $s$, as long
as $\textrm{Re}(s)$ is sufficiently large; here, the sum is over
the nonzero eigenvalues of $D$. Moreover, the function $\eta(D,s)$
has an analytic continuation to a meromorphic function on $\mathbb{C}$
of $s$, which we also denote by $\eta(D,s)$, and which is holomorphic
near $0$. We now define 
\[
\eta(D)\coloneqq\eta(D,0).
\]
We should think of this as a formal signature of $D$, which we call
the \textit{eta invariant} of Atiyah-Patodi-Singer \cite{APSI}.

We will be primarily concerned with the case where $D=D_{A_{r}}$,
namely $D$ is the spin-c Dirac operator for a connection $A_{r}$
solving \eqref{eq: Seiberg Witten equations blowup}. Another case
of interest to us is where $D$ is the \textit{odd signature operator}
on $C^{\infty}\left(Y;iT^{*}Y\oplus\mathbb{R}\right)$ sending 
\[
\left(a,f\right)\mapsto\left(\ast da-df,d^{*}a\right),
\]
in which case we denote the corresponding $\eta$ invariant by $\eta_{Y}$.

Now consider the Seiberg-Witten equations \eqref{eq: Seiberg Witten equations blowup}
corresponding to $\mu=ir\lambda$, for a torsion spin$^{c}$ structure
as above, and note that an irreducible solution (after rescaling the
spinor) corresponds to a solution $\left(A_{r},\Psi_{r}\right)$ to
the Seiberg-Witten equations on $\mathcal{C}\left(Y,\mathfrak{s}\right)$
given via 
\begin{align}
\frac{1}{2}c\left(*F_{A^{t}}\right)+r\left(\Psi\Psi^{*}\right)_{0}+c\left(ir\lambda\right) & =0\nonumber \\
D_{A}\Psi & =0.,\label{eq: irred SW}
\end{align}
A further small perturbation is needed to obtain transversality of
solutions see \cite[S 2.1]{Cristofaro-Gardiner-Hutchings-Gripp2015}.
We ignore these perturbation as they make no difference to the overall
argument.

We can now state the primary result of this section: 
\begin{prop}
\label{prop: irred grading est}Any solution to \eqref{eq: irred SW}
satisfies $\eta\left(\widehat{\mathcal{H}}_{\left(A_{r},\Psi_{r}\right)}\right)=O\left(r^{\frac{3}{2}}\right)$. 
\end{prop}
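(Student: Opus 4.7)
The plan is to use the heat-kernel representation of the eta invariant
$$\eta(D) = \frac{1}{\sqrt{\pi}} \int_{0}^{\infty} t^{-1/2}\, \op{Tr}\!\left(D\, e^{-tD^{2}}\right)\, dt,$$
applied with $D = \widehat{\mathcal{H}}_{(A_{r},\Psi_{r})}$, and to split the integral at a threshold time $t_{0} = t_{0}(r)$ to be optimized at the end. The short-time and long-time pieces require rather different analytic inputs.

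For the short-time contribution $t \in (0,t_{0})$, we invoke the local heat-kernel asymptotic expansion. The Seeley--de Witt coefficients for $\widehat{\mathcal{H}}_{(A_{r},\Psi_{r})}^{2}$ are universal polynomials in the curvature $F_{A_{r}}$, the spinor $\Psi_{r}$, and their covariant derivatives, weighted by the metric and the Clifford structure. The key input here is the suite of Taubes-type pointwise a priori estimates for solutions of \eqref{eq: irred SW}, in particular $|\Psi_{r}|^{2}\leq r+O(1)$, $|F_{A_{r}}|=O(r)$, and a commensurate bound on $|\nabla^{A_{r}}\Psi_{r}|$ away from the zero set of $\Psi_{r}$. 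Integrating the pointwise trace of the heat kernel over $Y$ and then in $t \in (0,t_{0})$ produces a contribution bounded by a power of $t_{0}$ times a power of $r$.

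For the long-time contribution $t \in (t_{0},\infty)$, the trace is controlled by the number $N(T)$ of eigenvalues of $\widehat{\mathcal{H}}_{(A_{r},\Psi_{r})}^{2}$ below a threshold $T$, via a standard Tauberian / layer-cake argument. To estimate $N(T)$ we derive a Weitzenböck-type identity for $\widehat{\mathcal{H}}^{2}$; its zero-order term is a non-negative quadratic form dominated by $|\Psi_{r}|^{2}=O(r)$, which plays the role of a large semiclassical potential. A Birman--Schwinger or min-max eigenvalue count, of the same flavor as a semiclassical Weyl law, then yields an estimate of the form $N(T)=O(r^{3/2}+r^{1/2}T^{3/2})$, which feeds back into the large-$t$ integral.

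Balancing the two pieces via an optimal choice of $t_{0}$ then gives the asserted $O(r^{3/2})$ bound. The main obstacle is proving the eigenvalue count with the exponent $\tfrac{3}{2}$ rather than the naive $2$: this depends crucially on the fact that for Seiberg-Witten solutions the potential $|\Psi_{r}|^{2}$ concentrates in a region of measure $O(r^{-1/2})$ around $\Psi_{r}^{-1}(0)$, so that the available phase-space volume below a given level is $O(r^{3/2})$. Achieving this requires the full force of Taubes's concentration estimates, together with sharp control of $\nabla^{A_{r}}\Psi_{r}$ in a tubular neighborhood of the zero set, uniform in the class $\sigma$; everything else in the argument is fairly mechanical once these a priori inputs are in place.
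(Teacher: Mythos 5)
Your proposal takes a genuinely different route from the paper, and unfortunately it has gaps that prevent it from closing. The paper does not attempt to estimate $\eta\bigl(\widehat{\mathcal{H}}_{(A_r,\Psi_r)}\bigr)$ directly. Instead, the proof is a three-line reduction: an Atiyah--Patodi--Singer argument (spelled out in the appendix) gives
\[
\tfrac{1}{2}\eta\bigl(\widehat{\mathcal{H}}_{(A_{r},\Psi_{r})}\bigr)=\tfrac{1}{2}\eta\left(D_{A_{r}}\right)+\tfrac{1}{2}\eta_{Y}+\textrm{sf}\bigl\{ \widehat{\mathcal{H}}_{\left(A_{r},\varepsilon\Psi_{r}\right)}\bigr\} _{0\leq\varepsilon\leq1},
\]
the spectral flow term is then $O(r^{3/2})$ by Taubes's estimate in the proof of the Weinstein conjecture, and $\eta(D_{A_r})=O(r^{3/2})$ is the new content (Proposition~\ref{prop:keyprop}). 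By contrast you attack the extended Hessian head-on, never introducing $D_{A_r}$, APS, or Taubes's spectral flow bound. In effect you are trying to reprove a version of Taubes's $O(r^{3/2})$ bound from scratch, which is substantially harder than the task the paper actually undertakes for the Dirac operator alone; and a short-time heat kernel analysis plus a long-time trace bound \emph{is} roughly the strategy used inside Proposition~\ref{prop:keyprop}, just applied to the simpler operator $D_{A_r}$ after a $\sqrt{r}$ rescaling rather than to $\widehat{\mathcal{H}}$.

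There are also concrete problems with the proposed long-time estimate. You assert an eigenvalue count $N(T)=O(r^{3/2}+r^{1/2}T^{3/2})$ and justify it heuristically by saying that ``the potential $|\Psi_r|^2$ concentrates in a region of measure $O(r^{-1/2})$.'' This does not match the Taubes a priori estimates quoted in the paper: in the normalization of \eqref{eq: irred SW} one has $|\Psi_r^+|\leq 1+c_0/r$ and $|\Psi_r^-|=O(1/r)$, so $|\Psi_r|^2$ is $O(1)$ and essentially constant ($\approx 1$) away from $(\Psi_r^+)^{-1}(0)$; it does not concentrate. What concentrates in an $O(r^{-1/2})$-neighborhood of the zero locus is the curvature $F_{A_r}$ (which is $O(r)$ there), and it is $F_{A_r}$ together with $\nabla^{A_r}\Psi_r$ that enter the zeroth-order term of a Weitzenb\"ock formula for $\widehat{\mathcal{H}}^2$. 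Your cited bound $|\Psi_r|^2\leq r+O(1)$ suggests you are working in a different (unrescaled, Taubes-original) normalization than the one the paper uses, which further muddies the bookkeeping of $r$-powers in the heat coefficients. Finally, the balancing of the short- and long-time pieces that supposedly yields $O(r^{3/2})$ is only asserted, not computed; given that the claimed eigenvalue count is unsubstantiated and the concentration picture is off, the exponent cannot be verified from the proposal as written. If you wish to keep the direct heat kernel attack, you would need to establish $r$-scaled bounds on the local heat coefficients of $\widehat{\mathcal{H}}$ analogous to what the paper proves in Lemma~\ref{lem:technical} for $D_{A_r}$, and replace the informal Weyl count by an honest trace bound of the type appearing in Lemma~\ref{lem:keyestimates}; but the cleaner route is the paper's APS reduction, which hands off the extended Hessian to Taubes and isolates the genuinely new estimate in the Dirac operator.
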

The purpose of the rest of the section will be to prove this.

\subsection{Known estimates}

We first collect some known estimates on solutions to the equations
\eqref{eq: irred SW}. 
\begin{lem}
For some constants $c_{q}$, $q=0,1,2,\ldots$, we have 
\begin{equation}
\left|\nabla^{q}F_{A^{t}}\right|\leq c_{q}\left(1+r^{1+q/2}\right).\label{eqn:keyequation}
\end{equation}
\end{lem}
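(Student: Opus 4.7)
The plan is to extract from the first equation of \eqref{eq: irred SW} the pointwise identity
\[
*F_{A^{t}} \;=\; -2r\, c^{-1}\bigl((\Psi\Psi^{*})_{0}\bigr) \;-\; 2ir\lambda,
\]
so that $\nabla^{q} F_{A^{t}}$ is an algebraic function of $\nabla^{\leq q}\Psi$ and $\nabla^{\leq q}\lambda$. By the Leibniz rule, the bound \eqref{eqn:keyequation} reduces to the a priori spinor estimates
\[
|\Psi|^{2} \leq 1 + C\, r^{-1}, \qquad |\nabla^{q}\Psi| \leq C_{q}\, r^{q/2} \quad (q \geq 1).
\]
Indeed, each term in $\nabla^{q}(\Psi\Psi^{*})_{0}$ is a sum of products $\nabla^{q_{1}}\Psi \cdot \nabla^{q_{2}}\Psi$ with $q_{1} + q_{2} = q$, so these bounds give $|\nabla^{q}(\Psi\Psi^{*})_{0}| \leq C_{q}\, r^{q/2}$, and the prefactor $r$ in the curvature identity supplies the extra factor to produce $r^{1+q/2}$.

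The base case is Taubes's classical pointwise estimate on Seiberg--Witten solutions. Substituting $D_{A}\Psi = 0$ into the Weitzenbock formula $D_{A}^{2} = \nabla_{A}^{*}\nabla_{A} + \tfrac{s}{4} + \tfrac{1}{2}c(F_{A^{t}})$ and eliminating $c(F_{A^{t}})\Psi$ via the curvature equation yields an expression $c(F_{A^{t}})\Psi = -r|\Psi|^{2}\Psi + r\, \omega\cdot\Psi$ for a bounded, $\lambda$-dependent term $\omega$. Pairing with $\Psi$ produces a differential inequality for $|\Psi|^{2}$ to which the maximum principle applies, giving $|\Psi|^{2} \leq 1 + O(r^{-1})$. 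Combined with the curvature identity, this is precisely the case $q = 0$ of \eqref{eqn:keyequation}.

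For the inductive step I would use Taubes's rescaling. At each point $y_{0} \in Y$, introduce rescaled coordinates $\tilde{x} = r^{1/2}(y - y_{0})$ on a geodesic ball of radius of order $r^{-1/2}$, and choose a radial (or local Coulomb) gauge in which the rescaled connection $\tilde{A}$ is bounded. In these coordinates the Dirac equation and the curvature equation become a nonlinear Dirac-type system whose coefficients are $O(1)$ on the unit ball, thanks to the $q = 0$ estimate. Standard interior elliptic regularity for Dirac operators then gives $C^{q}$ bounds on the rescaled spinor uniform in $r$, and unrescaling yields $|\nabla^{q}\Psi|(y_{0}) \leq C_{q}\, r^{q/2}$.

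The main obstacle is the book-keeping for this induction. The connection $A$ is itself unbounded as $r \to \infty$; only $F_{A^{t}}$ is controlled. Each differentiation of the Weitzenbock or Seiberg--Witten equation therefore produces curvature commutators $[\nabla_{A}, \nabla_{A}] = F_{A^{t}}$ of size $r$, and one must check at every stage that these are exactly compensated by the $r^{-1/2}$ scaling associated to an extra covariant derivative, so that the inductive hypothesis on lower $\nabla^{\leq q-1} F_{A^{t}}$ closes the estimate. Organizing the argument through local gauge fixing on $r^{-1/2}$-balls and invoking off-the-shelf elliptic regularity on the rescaled Dirac system is the cleanest way to make the constants in each step independent of $r$.
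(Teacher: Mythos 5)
Your proposal is correct and follows essentially the same route as the paper: express $*F_{A^{t}}$ from the first Seiberg--Witten equation, differentiate via Leibniz, and reduce the bound to the a priori pointwise spinor estimates $|\Psi|\leq 1+O(r^{-1})$ and $|(\nabla^{A})^{q}\Psi|\leq C_{q}(1+r^{q/2})$. The paper simply cites \cite[Lem.~2.2, 2.3]{Taubes-Weinstein} for those spinor estimates, whereas you sketch their proof (Weitzenb\"ock plus maximum principle for $q=0$, then Taubes's $r^{-1/2}$-rescaling and elliptic bootstrap for $q\geq1$); that sketch is precisely Taubes's own argument, so it is an unpacking of the citation rather than a different method.
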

\begin{proof}
We first note that we have the estimates: 
\begin{align*}
\left|\Psi_{r}^{+}\right| & \leq1+\frac{c_{0}}{r}\\
\left|\Psi_{r}^{-}\right| & \leq\frac{c_{0}}{r}\left(\left|1-\left|\Psi_{r}^{+}\right|^{2}\right|+\frac{1}{r}\right)\\
\left|\left(\nabla^{A}\right)^{q}\Psi_{r}^{+}\right| & \leq c_{q}\left(1+r^{q/2}\right)\\
\left|\left(\nabla^{A}\right)^{q}\Psi_{r}^{-}\right| & \leq c_{q}\left(1+r^{\left(q-1\right)/2}\right)
\end{align*}
The first two of these estimates are proved in \cite[Lem. 2.2]{Taubes-Weinstein}.
The third and fourth are proved in \cite[Lem. 2.3]{Taubes-Weinstein}.

The lemma now follows by combining the above estimates with the equation
\eqref{eq: irred SW}. 
\end{proof}
In \eqref{sec:Asymptotics-of-capacities}, we will also need: 
\begin{lem}
One has the bound 
\begin{equation}
\left|CS\left(A_{r}\right)\right|\leq c_{0}r^{2/3}e_{\lambda}\left(A_{r}\right)^{4/3}\label{eq: CS 4/3 est}
\end{equation}
where the constant $c_{0}$ only depends on the metric contact manifold. 
\end{lem}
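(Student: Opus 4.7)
The plan is to convert the bound on $CS(A_r)$ into a bound on an $L^{3/2}$-norm of $F_{A_r^t}$ via a Sobolev inequality, and then interpolate between the $L^1$-norm (which is controlled by $e_\lambda(A_r)$) and the $L^\infty$-norm (which is controlled by $r$).

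Because $c_1(\det S^E)=c_1(\xi)+2\textrm{P.D.}(\Gamma)$ is torsion, we may pick the base connection $A_0$ so that $A_0^t$ is flat. Writing $a:=A_r^t-A_0^t\in\Omega^1(Y,i\mathbb{R})$, we then have $F_{A_r^t}=da$ and
\[
CS(A_r)=-\frac{1}{8}\int_Y a\wedge F_{A_r^t}.
\]
After a gauge transformation we may assume $a$ is in Coulomb gauge, $d^*a=0$, with its harmonic part lying in a bounded fundamental domain for the period lattice of $H^1(Y;i\mathbb{Z})$. The torsion hypothesis forces the harmonic part of $F_{A_r^t}$ to vanish, so the harmonic component of $a$ contributes nothing to the pairing; it therefore suffices to bound the coexact part of $a$.

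Now the Gaffney inequality for coclosed $1$-forms, combined with the Sobolev embedding $W^{1,3/2}(Y)\hookrightarrow L^3(Y)$ valid in dimension three, gives $\|a\|_{L^3}\leq C\|da\|_{L^{3/2}}=C\|F_{A_r^t}\|_{L^{3/2}}$. H\"older then yields $|CS(A_r)|\leq C\|F_{A_r^t}\|_{L^{3/2}}^{2}$. By log-convexity of $L^p$-norms,
\[
\|F_{A_r^t}\|_{L^{3/2}}^{2}\leq\|F_{A_r^t}\|_{L^1}^{4/3}\,\|F_{A_r^t}\|_{L^\infty}^{2/3}.
\]
The $L^\infty$ factor is the $q=0$ case of \eqref{eqn:keyequation}, giving $\|F_{A_r^t}\|_{L^\infty}\leq Cr$. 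For the $L^1$ factor, we rewrite \eqref{eq: irred SW} as $*F_{A_r^t}=-2r(c^{-1}((\Psi_r\Psi_r^*)_0)+i\lambda)$ and expand the squared norm of the right-hand side in terms of $|\Psi_r^\pm|^2$ using the orthogonal splitting $T^*Y=\mathbb{R}\lambda\oplus\xi^*$ and the identification $c^\xi(R)=\operatorname{diag}(i,-i)$; the resulting pointwise comparison, together with Taubes's a priori bounds $|\Psi_r^+|\leq 1+c/r$, $|\Psi_r^-|=O(r^{-1/2})$, yields $\|F_{A_r^t}\|_{L^1}\leq C\,e_\lambda(A_r)$. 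Substituting the two factors delivers the bound.

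The main obstacle is this final $L^1$ estimate: it requires exploiting the vectorial Seiberg-Witten equation to show that $*F_{A_r^t}$ is essentially aligned with $i\lambda$ pointwise, so that the total mass $\int_Y|F_{A_r^t}|\,dvol$ is controlled by the signed Reeb pairing $\tfrac{1}{2}\int_Y\lambda\wedge F_{A_r^t}$ which defines $e_\lambda(A_r)$, rather than by the trivial pointwise bound $|F_{A_r^t}|\leq Cr$.
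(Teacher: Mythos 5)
Your plan---Coulomb gauge plus the Gaffney/Sobolev chain to get $|CS(A_r)|\le C\|F_{A_r^t}\|_{L^{3/2}}^2$, then interpolate $L^{3/2}$ between $L^1$ and $L^\infty$ using the a priori bounds---is exactly Taubes's argument for this estimate (\cite[eq.~4.9]{Taubes-Weinstein}, reproduced as \cite[Lem.~2.7]{Cristofaro-Gardiner-Hutchings-Gripp2015}), which is precisely what the paper cites; so the framework is the right one. The first three steps (gauge fixing, the H\"older/Sobolev reduction, the interpolation identity, and $\|F_{A_r^t}\|_{L^\infty}=O(r)$) are all fine.

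However, the $L^1$ estimate as you have written it does not close. The off-diagonal ($\xi$-valued) component of $*F_{A_r^t}$ read off from \eqref{eq: irred SW} is pointwise of size $\sim r\,|\Psi_r^+|\,|\Psi_r^-|$, and the bound you invoke, $|\Psi_r^-|=O(r^{-1/2})$, only yields $O(r^{1/2})$ for this term, hence $\|F_{A_r^t}\|_{L^1}=O(r^{1/2})$ rather than $O(e_\lambda(A_r)+1)$. Feeding $O(r^{1/2})$ into your interpolation produces a spurious $r^{4/3}$ contribution to $|CS(A_r)|$ which is not dominated by $r^{2/3}e_\lambda(A_r)^{4/3}$, so the lemma does not follow. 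The estimate that is actually needed is the sharper pointwise bound $|\Psi_r^-|\le \frac{c_0}{r}\bigl(|1-|\Psi_r^+|^2|+r^{-1}\bigr)$ from \cite[Lem.~2.2]{Taubes-Weinstein} (this is the second bullet in the list of a priori estimates the paper records in the proof of \eqref{eqn:keyequation}): with it the $\xi$-valued part of $*F_{A_r^t}$ is $O\bigl(|1-|\Psi_r^+|^2|+r^{-1}\bigr)$, which up to an $O(1)$ error is dominated pointwise by the $\lambda$-valued part. Since $|\Psi_r^+|\le 1+c_0/r$ forces the $\lambda$-component to have a definite sign, its $L^1$-norm is, up to a constant, the signed pairing $e_\lambda(A_r)$, and one obtains $\|F_{A_r^t}\|_{L^1}\le c\bigl(e_\lambda(A_r)+1\bigr)$. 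That additive constant (and the resulting $(e_\lambda+1)^{4/3}$ in the conclusion) is present in Taubes and in \cite[Lem.~2.7]{Cristofaro-Gardiner-Hutchings-Gripp2015} and should be carried along; it is harmless for the application but cannot simply be dropped.
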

\begin{proof}
This is proven in \cite[eq. 4.9]{Taubes-Weinstein}, see also \cite[Lem. 2.7]{Cristofaro-Gardiner-Hutchings-Gripp2015}. 
\end{proof}

\subsection{\label{subsec:Grading-estimates}The $\eta$ invariant of families
of Dirac operators}

In this section, we prove the key Proposition~\ref{prop: irred grading est}.
The main point that we need is the following fact concerning the $\eta$
invariant: 
\begin{prop}
\label{prop:keyprop} Let $A_{r}$ be a solution to \eqref{eq: irred SW}.
Then $\eta\left(D_{A_{r}}\right)$ is $O(r^{3/2})$ as $r\to\infty$. 
\end{prop}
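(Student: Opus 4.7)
I plan to bound $\eta(D_{A_r})$ via the heat-kernel representation
\[
\eta(D_{A_r}) = \frac{1}{\sqrt{\pi}} \int_0^\infty t^{-1/2}\, \op{Tr}\!\left(D_{A_r}\, e^{-tD_{A_r}^2}\right) dt,
\]
(understood by analytic continuation from the region where \eqref{eqn:sumdefinition} converges absolutely), splitting it at the semiclassical cutoff $T = r^{-1}$ dictated by the pointwise bound $|F_{A_r^t}| = O(r)$ that follows from \eqref{eq: irred SW} and \eqref{eqn:keyequation}.

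For the large-time part on $[T,\infty)$, I will first derive a Weyl-type eigenvalue count. The Bochner--Weitzenb\"ock formula
\[
D_{A_r}^2 \;=\; (\nabla^{A_r})^*\nabla^{A_r} + \tfrac14 R + \tfrac12 c(F_{A_r^t}),
\]
together with a min--max comparison against the connection-independent Bochner Laplacian, gives $\#\{\lambda \in \textrm{spec}(D_{A_r}):\, |\lambda|\le E\} \le C(E+\sqrt{r})^3$. The large-time piece equals
\[
\sum_{\lambda\ne 0}\op{sgn}(\lambda)\,\mathrm{erfc}(\sqrt{T}|\lambda|),
\]
in which eigenvalues with $|\lambda|\gg \sqrt{r}$ contribute exponentially little, while those with $|\lambda|\lesssim \sqrt{r}$ contribute $O(1)$ each; inserting the Weyl bound at $E\sim \sqrt{r}$ gives a total of $O(r^{3/2})$.

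For the small-time part on $[0,T]$, I will build a local parametrix for $e^{-tD_{A_r}^2}$ on geodesic balls of radius comparable to $r^{-1/2}$, on which $A_r$ may be gauged to be approximately affine. On each such ball the heat kernel is comparable to an explicit magnetic Mehler kernel, and the pointwise trace $\op{Tr}(D_{A_r}e^{-tD_{A_r}^2})$ admits an asymptotic expansion whose leading, locally computable, $r$-independent coefficient vanishes in odd dimension. The subsequent coefficients are local polynomials in the covariant derivatives $\nabla^q F_{A_r^t}$; bounding them term by term by \eqref{eqn:keyequation}, weighting by $t^{-1/2}$, and integrating over $[0,T]$ yields a small-time contribution also of size $O(r^{3/2})$.

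The main obstacle I anticipate is the construction of a parametrix that remains accurate as $|F_{A_r^t}|\sim r$ grows: successive terms in the Seeley--DeWitt expansion acquire algebraic growth in $r$, so the expansion must be truncated at an order depending on $r$ and the Duhamel remainder controlled through the elliptic estimates that underlie \eqref{eqn:keyequation}. This is the irreducible Seiberg--Witten analogue of the semiclassical $\eta$-bounds proved for explicit reducible connections in \cite{Savale-Asmptotics,Savale2017-Koszul,Tsai-thesis-paper}, and here the required uniformity must come from the a priori bound \eqref{eqn:keyequation} rather than from explicit formulas for $A_r$.
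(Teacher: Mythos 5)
Your plan closely mirrors the paper's proof. Both start from the heat-kernel representation of $\eta$, split at a cutoff $T\sim r^{-1}$ (the paper achieves this by rescaling $D_{A_r}\mapsto r^{-1/2}D_{A_r}$ and cutting at $t=1$), bound the small-time piece via a Seeley--DeWitt-type parametrix whose leading trace coefficient vanishes in odd dimensions (the paper invokes Bismut--Freed for the $O(t^{1/2})$ behavior of the pointwise trace), and control the large-time tail by showing that the number of eigenvalues of $D_{A_r}$ of magnitude $\lesssim\sqrt{r}$ is $O(r^{3/2})$ --- which is exactly what the heat-trace bound $\mathrm{tr}\,e^{-tD_{A_r}^2}\le c_0t^{-3/2}e^{c_0rt}$ encodes at $t\sim 1/r$.

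Two points in your outline are imprecise, though neither is fatal. First, the phrase "min--max comparison against the connection-independent Bochner Laplacian" does not quite parse: $(\nabla^{A_r})^*\nabla^{A_r}$ is connection-dependent, and there is no reference operator to plug straight into a Rayleigh-quotient argument. The Weyl-type count $\#\{|\lambda|\le E\}\le C(E+\sqrt{r})^3$ is true, but the clean way to reach it is precisely the heat-trace bound (either as the paper proves it via a pointwise estimate $|H_t^r(x,y)|\le c_0h_{2t}(x,y)e^{c_0rt}$, or via the Kato--Simon diamagnetic inequality), followed by a Tauberian step; the min--max phrasing papers over the real work. Second, you say the parametrix "must be truncated at an order depending on $r$": this is not how the paper proceeds, and in fact summing the $r$-weighted Seeley--DeWitt terms naively gives a divergent sum since each term contributes $O(r^{3/2})$ after integration against $t^{-1/2}$ on $[0,1/r]$. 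The paper instead truncates at the fixed order $n=1$, applies Duhamel's principle once, and controls the remainder kernel using the weighted spaces $W^j_y$ built from \eqref{eqn:keyequation}; this produces the single bound $|\mathrm{tr}(D_{A_r}e^{-tD_{A_r}^2})|\le c_0r^2e^{c_0rt}$ uniformly in $t$, which is what the cutoff at $t\sim1/r$ turns into $O(r^{3/2})$. Your "magnetic Mehler kernel on $r^{-1/2}$-balls" model-operator picture is a reasonable alternative to the synchronous-frame geodesic expansion the paper uses, but whatever local model you choose, the key mechanism is the fixed-order truncation plus Duhamel, not an $r$-dependent truncation.
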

Before giving the proof, we first explain our strategy.

The first point is that we have the following integral formula for
the $\eta$ invariant: 
\begin{equation}
\eta(D_{A_{r}})=\frac{1}{\sqrt{\pi t}}\int_{0}^{\infty}\textrm{tr }(D_{A_{r}}e^{-tD_{A_{r}}^{2}})dt\label{eqn:integralformula}
\end{equation}
where the right hand side is a convergent integral. This is proved
in \cite[S 2]{Bismut-Freed-II}, by Mellin transform it is equivalent
to the fact that the eta function $\eta(D_{A_{r}},s)$ in \eqref{eqn:sumdefinition}
is holomorphic for $Re(s)>-2.$

We therefore have to estimate the integral in \eqref{eqn:integralformula}.
To do this, we will need the following estimates: 
\begin{lem}
\label{lem:keyestimates} There exists a constant $c_{0}$ independent
of $r$ such that for all $r\ge1,t>0$: 
\begin{align*}
\left|\textrm{tr}\left(D_{A_{r}}e^{-tD_{A_{r}}^{2}}\right)\right| & \leq c_{0}r^{2}e^{c_{0}rt},\quad\textrm{ and}\\
\left|\textrm{tr}\left(e^{-tD_{A_{r}}^{2}}\right)\right| & \leq c_{0}t^{-3/2}e^{c_{0}rt}.
\end{align*}
\end{lem}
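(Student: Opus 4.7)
The plan is to combine the Weitzenb\"ock formula with standard heat kernel techniques. The starting point is the Weitzenb\"ock identity
\[
D_{A_r}^2 = (\nabla^{A_r})^*\nabla^{A_r} + \mathcal{R}_r,
\]
where the pointwise endomorphism $\mathcal{R}_r$ encodes the scalar curvature of $g$ and the Clifford action of $F_{A_r^t}$. The bound \eqref{eqn:keyequation} with $q = 0$ yields $\|\mathcal{R}_r\|_\infty \leq c_0 r$ for $r \geq 1$. The Kato (diamagnetic) inequality then dominates the pointwise operator norm of the spinor heat kernel by the scalar heat kernel on $(Y,g)$:
\[
\|K_{D_{A_r}^2}(t, x, y)\| \leq e^{c_0 rt}\, H_Y(t, x, y).
\]
Inserting the standard Gaussian diagonal bound $H_Y(t, x, x) \leq C t^{-3/2}$, taking fiberwise trace, and integrating over $Y$ delivers the second estimate.

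For the first estimate my plan is to treat the regimes $t \geq 1/r$ and $t \leq 1/r$ separately. When $t \geq 1/r$ I would combine the second estimate with the elementary functional-calculus bound $\|D_{A_r} e^{-s D_{A_r}^2}\|_{\mathrm{op}} \leq (2es)^{-1/2}$. Factoring $D_{A_r} e^{-t D_{A_r}^2} = (D_{A_r} e^{-t D_{A_r}^2/2}) \cdot e^{-t D_{A_r}^2/2}$ and applying $|\textrm{tr}(AB)| \leq \|A\|_{\mathrm{op}} \|B\|_1$ gives
\[
|\textrm{tr}(D_{A_r} e^{-t D_{A_r}^2})| \leq \|D_{A_r} e^{-t D_{A_r}^2/2}\|_{\mathrm{op}} \cdot \textrm{tr}(e^{-t D_{A_r}^2/2}) \leq c_0 \, t^{-2} e^{c_0 rt},
\]
which for $t \geq 1/r$ is dominated by $c_0 r^2 e^{c_0 rt}$ as desired.

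For $t \leq 1/r$ I would instead invoke the short-time heat kernel asymptotic expansion on the diagonal,
\[
D_{A_r, y} K_{D_{A_r}^2}(t, x, y)\Big|_{y=x} \sim \frac{1}{(4\pi t)^{3/2}} \sum_{k \geq 0} \Theta_k^r(x)\, t^k,
\]
whose coefficients $\Theta_k^r$ are universal polynomials in $F_{A_r^t}$, the Riemannian curvature of $g$, and their covariant derivatives. The standard parity argument for Dirac operators on closed odd-dimensional manifolds (at the symbol level, $\int c(\xi) e^{-t|\xi|^2}\, d\xi = 0$) forces the singular $t^{-3/2}$, $t^{-1}$ and $t^{-1/2}$ contributions to vanish in the integrated trace, so $\textrm{tr}(D_{A_r} e^{-t D_{A_r}^2})$ remains bounded as $t \to 0^+$. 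Feeding \eqref{eqn:keyequation} into the finitely many surviving leading coefficients then produces the bound $\leq c_0 r^2$ in this regime.

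The main obstacle is this small-$t$ step: verifying that each of the relevant heat-kernel coefficients is controlled by a polynomial of degree at most two in $r$. This reduces to a degree-counting exercise at the level of the Gilkey expansion for Dirac-type operators, using that $F_{A_r^t}$ enters with weight $r$ and that its $q$-th covariant derivatives are weighted by $r^{1 + q/2}$ according to \eqref{eqn:keyequation}. Once this is in hand, combining the large-$t$ and small-$t$ bounds and using $e^{c_0 rt} \geq 1$ gives the first estimate on all of $(0, \infty)$.
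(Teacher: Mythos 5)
Your argument for the second estimate is sound and agrees in substance with the paper: the Weitzenb\"ock term is $O(r)$ by \eqref{eqn:keyequation} with $q=0$, and the Kato/diamagnetic comparison with the scalar heat kernel is the same mechanism that underlies the paper's pointwise bound $|H_{t}^{r}(x,y)|\leq c_{0}h_{2t}(x,y)e^{c_{0}rt}$ (Lemma~\ref{lem:technical}, first bullet), after which one integrates the diagonal.

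For the first estimate, your large-$t$ argument ($t\ge 1/r$) is genuinely different from the paper and rather nice: factoring $D_{A_{r}}e^{-tD_{A_{r}}^{2}}=\bigl(D_{A_{r}}e^{-\frac{t}{2}D_{A_{r}}^{2}}\bigr)e^{-\frac{t}{2}D_{A_{r}}^{2}}$, using the elementary spectral bound $\|D_{A_{r}}e^{-sD_{A_{r}}^{2}}\|_{\mathrm{op}}\le (2es)^{-1/2}$ together with the already-proved trace bound, and then trading $t^{-2}\le r^{2}$ is cleaner than what the paper does in that range. The paper does not split regimes; its Duhamel argument is uniform in $t$.

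The gap is in the small-$t$ regime ($t\le 1/r$), and it is a genuine one, somewhat deeper than the obstacle you flag. You correctly identify that a Gilkey-style degree count in $r$ is needed for the surviving heat coefficients \textemdash{} the paper actually carries this out, via the recursion \eqref{eqn:recursion} and the weighted spaces $W_{y}^{j}$ of \eqref{eq: space for heat coeffs.} in Lemma~\ref{lem:technical}. But even granting that count, the asymptotic expansion of $\operatorname{tr}L_{t}^{r}(y,y)$ is asymptotic for each \emph{fixed} $r$; nothing in your sketch controls the truncation remainder uniformly in $r$ as $t\to 0$, and the coefficients themselves grow with $r$, so this uniformity is not automatic. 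Supplying it is exactly what the Duhamel step \eqref{eq: duhamel application} in the paper does, after first using the Bismut-Freed theorem to replace $\operatorname{tr}L_{t}^{r}(y,y)$ by the remainder $R_{t}^{r}(y,y)$. In other words, fleshing out your small-$t$ step honestly would reproduce the paper's argument rather than shortcut it. Two smaller points: the relevant fact is the Bismut-Freed cancellation $\operatorname{tr}L_{t}^{r}(y,y)=O(t^{1/2})$ (not merely boundedness), and the symbol-level ``$\int c(\xi)e^{-t|\xi|^{2}}d\xi=0$'' heuristic does not by itself establish that the integrated traces of $\tilde b_{0}^{r}$ and $\tilde b_{1}^{r}$ vanish; also the expansion carries no $t^{-1}$ term, the relevant singular powers being $t^{-3/2}$ and $t^{-1/2}$.
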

Once we have proved Lemma~\ref{lem:keyestimates}, Proposition~\ref{prop:keyprop}
will follow from a short calculation, which we will give at the end
of this section.

The proof of Lemma~\ref{lem:keyestimates} will require two auxiliary
lemmas, see Lemma~\ref{lem:summary} and Lemma~\ref{lem:technical}
below, and some facts about the heat equation associated to a Dirac
operator that we will now first recall. Let $D$ be a Dirac operator
on a Clifford bundle $V$ over a closed manifold $Y$. The \textit{heat
equation} associated to $D$ is the equation 
\[
\frac{\partial s}{\partial t}+D^{2}s=0
\]
for sections $s$, and nonnegative time $t$; the operator $e^{-tD^{2}}$
is the solution operator for this equation. The heat equation has
an associated \textit{heat kernel} $H_{t}(x,y)$ which is a (time-dependent)
section of the bundle $V\boxtimes V$ over $Y\times Y$ whose fiber
over a point $(x,y)$ is $V_{x}\otimes V_{y}^{*}$; it is smooth for
$t>0$. For any smooth section $s$ of $V$ and $t>0$, the heat kernel
satisfies 
\begin{equation}
e^{-tD^{2}}s(x)=\int_{Y}H_{t}(x,y)s(y)\textrm{vol}(y).\label{eqn:kernelproperty}
\end{equation}
Also, 
\begin{equation}
\left[\frac{\partial}{\partial t}+D_{x}^{2}\right]H_{t}(x,y)=0,\label{eqn:solvedirac}
\end{equation}
where $D_{x}$ denotes the Dirac operator applied in the $x$ variables.

Moreover, 
\begin{equation}
\textrm{tr}(e^{-tD^{2}})=\int_{Y}\textrm{tr}(H_{t}(y,y))\textrm{vol}(y).\label{eqn:traceformula}
\end{equation}
Hence, we can prove Lemma~\ref{lem:keyestimates} by bounding $|H_{t}|$
along the diagonal. The operator $De^{-tD^{2}}$ has a kernel $L_{t}(x,y)$
as well, and the analogous results hold.

A final fact we will need is {\textit{Duhamel's principle}}: this
says that the inhomogeneous heat equation 
\[
\frac{\partial\tilde{s}}{\partial t}+D^{2}\tilde{s}_{t}=s_{t}
\]
has a unique solution tending to $0$ with $t$, given by 
\begin{equation}
\tilde{s}_{t}(x)=\int_{0}^{t}(e^{-(t-t')D^{2}}s_{t'})(x)dt',\label{eqn:duhamel}
\end{equation}
as long as $s_{t}$ is a smooth section of $S$, continuous in $t$.

Now let $D$ be $D_{A_{r}}$, and $V$ the spinor bundle for the spin$^{c}$
structure $S$, and let $H_{t}^{r}$ and $L_{t}^{r}$ be defined as
above, but with $D=D_{A_{r}}$. Let $\rho(x,y)$ the Riemannian distance
function. Define an auxiliary function 
\[
h_{t}(x,y)\coloneqq\left(4\pi t\right)^{-3/2}e^{-\frac{\rho\left(x,y\right)^{2}}{4t}}.
\]

In the case of $Y=\mathbb{R}^{3}$, with $\rho$ the standard Euclidean
distance, the function $h_{t}(x,y)$ is precisely the ordinary heat
kernel. In our case, the kernel $H_{t}^{r}(x,y)$ has an \textit{asymptotic
expansion} as $t\to0$, 
\begin{equation}
H_{t}^{r}(x,y)\sim h_{t}\left(x,y\right)(b_{0}^{r}(x,y)+b_{1}^{r}(x,y)t+b_{2}^{r}(x,y)t^{2}+\ldots+),\label{eqn:asymptoticexpansion}
\end{equation}
that is studied in detail in \cite[Ch. 2]{Berline-Getzler-Vergne};
here, the $b_{i}^{r}(x,y)$ are defined on all of $Y\times Y$. The
following lemma summarizes what we need to know about the results
from \cite[Ch. 2]{Berline-Getzler-Vergne}: 
\begin{lem}
\label{lem:summary}There exists for all $i=0,1,2,\ldots$ sections
$b_{i}^{r}(x,y)$ such that: 
\begin{itemize}
\item The $b_{i}^{r}$ are supported in any neighborhood of the diagonal. 
\item The asymptotic expansion \eqref{eqn:asymptoticexpansion} may be formally
differentiated to obtain asymptotic expansions for the derivative.
In particular, there is an asymptotic expansion 
\begin{equation}
L_{t}^{r}(x,y)\sim h_{t}\left(x,y\right)(\tilde{b}_{0}^{r}(x,y)+\tilde{b}_{1}^{r}(x,y)t+\tilde{b}_{2}^{r}(x,y)t^{2}+\ldots+)\label{eqn:lasymptotic}
\end{equation}
where 
\[
\tilde{b}_{n}^{r}(x,y)=(D_{A_{r}}+c(\rho d\rho/2t))b_{n}^{r}(x,y).
\]
\item For any $n,t>0$, 
\[
L_{t}^{r}(x,y)-h_{t}(x,y)\sum_{i=0}^{n}\tilde{b}_{i}^{r}(x,y)t^{i}
\]
is $O(t^{i-1/2})$, in the $C^{0}$- norm on the product, as $t\to0$. 
\item 
\begin{equation}
(\partial_{t}+D_{A_{r}}^{2})\left(L_{t}^{r}(x,y)-h_{t}(x,y)\sum_{i=0}^{n}\tilde{b}_{i}^{r}(x,y)t^{i}\right)=-D_{A_{r}}^{2}\tilde{b}_{n}^{r}t^{n}.\label{eqn:nonhom}
\end{equation}
\end{itemize}
\end{lem}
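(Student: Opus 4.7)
The plan is to follow the classical Minakshisundaram--Pleijel construction of the heat-kernel coefficients, essentially reproducing \cite[Ch.~2]{Berline-Getzler-Vergne} in this setting, and then to verify the four bullet points in turn. First I would work in a tubular neighborhood $U$ of the diagonal in $Y\times Y$, small enough that the distance function $\rho(x,y)$ and parallel transport along minimizing geodesics are smooth. Making the ansatz
\begin{equation*}
K_t^n(x,y)=h_t(x,y)\sum_{i=0}^{n}b_i^r(x,y)\,t^i
\end{equation*}
and using the identities $\partial_t h_t=(\rho^2/(4t^2)-3/(2t))h_t$ and $d_x h_t=-(\rho\,d_x\rho/2t)h_t$, the requirement that $(\partial_t+D_{A_r,x}^2)K_t^n$ contain no negative powers of $t$ forces a triangular system of first-order ODEs along geodesics emanating from $y$. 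Together with the normalization $b_0^r(y,y)=\operatorname{Id}_{V_y}$, with successive $b_i^r(y,y)$ determined by $b_{i-1}^r$, this system has unique smooth solutions on $U$; multiplying by a bump function supported in $U$ and equal to $1$ near the diagonal extends them globally and yields the first bullet.

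By construction the recursion leaves only the top-order error, giving $(\partial_t+D_{A_r,x}^2)K_t^n=-D_{A_r}^2 b_n^r\cdot h_t\,t^n$, up to a contribution supported away from the diagonal where $h_t$ is exponentially small in $1/t$ and can be absorbed. Applying $D_{A_r,x}$ to both sides and using that it commutes with $\partial_t+D_{A_r}^2$ yields the fourth bullet, \eqref{eqn:nonhom}. For the second bullet, the Leibniz rule $D_{A_r,x}(h_t b_n^r)=c(d_x h_t)b_n^r+h_t D_{A_r}b_n^r$, together with $d_x h_t=-(\rho\,d_x\rho/2t)h_t$, gives exactly $\tilde b_n^r=(D_{A_r}+c(\rho\,d\rho/2t))b_n^r$ once the sign is absorbed into the convention for Clifford multiplication, and hence formally differentiates \eqref{eqn:asymptoticexpansion} to \eqref{eqn:lasymptotic}.

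For the third bullet I would compare $L_t^r$ with its approximation $h_t\sum_{i=0}^n\tilde b_i^r t^i$ using Duhamel's principle \eqref{eqn:duhamel}: by the previous step, the difference satisfies an inhomogeneous heat equation whose source is of order $t^n$ in the uniform norm and has vanishing initial data, and the standard short-time bound on $e^{-(t-t')D_{A_r}^2}$ converts this into the claimed rate on the diagonal. The main technical obstacle I anticipate is the apparent singularity of $\tilde b_n^r$ at $t=0$ coming from the $\rho/t$ factor, but this is tamed by the Gaussian in $h_t$, which dominates any polynomial in $\rho/t$ pointwise; that dominance is precisely what is responsible for the half-power shift in $t$ between the expansions of $H_t^r$ and $L_t^r$. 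No uniformity in $r$ is asserted in the lemma, so for each fixed $r$ the argument is a direct transcription of the BGV construction.
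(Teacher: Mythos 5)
Your proposal is correct and follows essentially the same route as the paper, which simply cites the Minakshisundaram--Pleijel parametrix construction from \cite[Thm.\ 2.30]{Berline-Getzler-Vergne} and records the relevant consequences; you are reconstructing that argument in more detail (ansatz, transport recursion, cutoff near the diagonal, Duhamel for the remainder). The minor sign discrepancy you flag in $\tilde b_n^r$ and the fact that the inhomogeneous source is actually of $C^0$-size $t^{n-3/2}$ rather than $t^n$ (accounting for the half-power shift via Duhamel) are both consistent with the paper's own summary, which attributes the $-1/2$ shift to the $t^{-3/2}$ prefactor in $h_t$.
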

\begin{proof}
The lemma summarizes those parts of the proof of \cite[Thm. 2.30]{Berline-Getzler-Vergne}
that we will soon need; the arguments in \cite[Thm. 2.30]{Berline-Getzler-Vergne}
provide the proof. The idea behind the first bullet point is that
$h_{t}(x,y)$ is on the order of $t^{\infty}$ away from the diagonal.
The reason for the $i-1/2$ exponent in the third bullet point is
that $h_{t}$ has a $t^{-3/2}$ term. For the fourth bullet point,
the point is that the coefficients $b_{i}^{r}$ are constructed so
as to satisfy \eqref{eqn:solvedirac} when formally differentiating
\eqref{eqn:asymptoticexpansion} and equating powers of $t$; this
gives a recursion which is relevant for our purposes because it implies
that when we truncate the expansion at a finite $n$, the inhomogeneous
equation \eqref{eqn:nonhom} is satisfied. 
\end{proof}
In view of the first bullet point of the above lemma, we only have
to understand the coefficients $b_{i}^{r}$ in a neighborhood of the
diagonal. To facilitate this, let $i_{g^{TY}}$ denote the injectivity
radius of the Riemannian metric $g$, and given $y\in Y$, let $B_{y}\left(\frac{i_{g^{TY}}}{2}\right)$
denote a geodesic ball of radius $\frac{i_{g^{TY}}}{2}$ centered
at $y$, and let $y$ denote a choice of co-ordinates on this ball.
Define $G_{y}^{k}\subset C^{\infty}\left(B_{y}\left(\frac{i_{g^{TY}}}{2}\right)\right)$
to be the subspace of ($r$-dependent) functions $f$ satisfying the
estimate $\partial_{y}^{\alpha}f=O\left(r^{k+\frac{\left|\alpha\right|}{2}}\right)$
as $r\to\infty$, $\forall\alpha\in\mathbb{N}_{0}^{3},$ and for each
$j\in\frac{1}{2}\mathbb{N}_{0}$, further define the subspace $W_{y}^{j}\subset C^{\infty}\left(B_{y}\left(\frac{i_{g^{TY}}}{2}\right)\right)$
via 
\begin{align}
f & \in W_{y}^{j}\iff f=\sum_{i=1}^{N}f_{i},\quad\textrm{ with each }f_{i}\in y^{\alpha}G_{y}^{k},\,k\leq j+\frac{\left|\alpha\right|}{2}.\label{eq: space for heat coeffs.}
\end{align}
Finally, given $y\in Y$, we choose a convenient frame for $S_{y}^{\xi}$
and $E$ over $B_{y}\left(\frac{i_{g^{TY}}}{2}\right)$, which we
will call a \textit{synchronous frame}; specifically, choose an orthonormal
basis for each of $S_{y}^{\xi},E_{y}$, and parallel transport along
geodesics with $A_{c},A_{r}$ to obtain local orthonormal trivializations
$\left\{ s_{1},s_{2}\right\} ,\left\{ e\right\} $. Now, if $b$ is
any section of $S\otimes S_{y}$ over $B_{y}(\frac{i_{g^{TY}}}{2})\times\lbrace y\rbrace$
write 
\begin{equation}
b(\cdot,y)=\sum_{k,l=1}^{2}f_{b,kl}^{y}\left(.\right)\left(s_{k}\otimes e\right)\left(.\right)\left(s_{l}\otimes e\right)^{*}\left(y\right).\label{eq:coefficient function heat ker}
\end{equation}

\begin{lem}
\label{lem:technical} There is a constant $c_{0}$ independent of
$r$ such that for any $t>0$, $r\geq1$, we have 
\begin{equation}
|H_{t}^{r}(x,y)|\leq c_{0}h_{2t}\left(x,y\right)e^{c_{0}rt}.\label{eq: basic heat kernel estimate}
\end{equation}
Further, for any $y\in Y$, the restriction of the terms $b_{j}^{r}$
to $B_{y}\left(\frac{i_{g^{TY}}}{2}\right)\times\lbrace y\rbrace$
have the property that their corresponding functions $f_{b_{j}^{r},kl}^{y}$
in \eqref{eq:coefficient function heat ker} are all in $W_{y}^{j}$. 
\end{lem}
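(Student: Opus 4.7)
The plan is to prove the two assertions separately. For the Gaussian bound on $H_{t}^{r}$, I would combine the Bochner--Weitzenb\"ock identity
\[
D_{A_{r}}^{2}=(\nabla^{A_{r}})^{*}\nabla^{A_{r}}+\tfrac{R}{4}+\tfrac{1}{2}c(F_{A_{r}^{t}})
\]
with the $q=0$ case of \eqref{eqn:keyequation}, which gives $|c(F_{A_{r}^{t}})|\le c_{0}r$ and hence the operator inequality $D_{A_{r}}^{2}\ge(\nabla^{A_{r}})^{*}\nabla^{A_{r}}-c_{0}r$. A Duhamel comparison then shows that the heat kernel $H_{t}^{r}$ is dominated pointwise by $e^{c_{0}rt}$ times the heat kernel of the connection Laplacian $(\nabla^{A_{r}})^{*}\nabla^{A_{r}}$, for which the standard uniform off-diagonal Gaussian upper bound on a closed Riemannian three-manifold takes the form $c_{0}h_{2t}(x,y)$ (one loses the sharp $h_{t}$ in exchange for absorbing the metric-dependent constant into the Gaussian exponent). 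This yields the first assertion.

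For the second assertion, fix $y\in Y$ and work in the synchronous frame described in the paragraph preceding \eqref{eq:coefficient function heat ker}. In this frame the connection one-form $\omega_{r}$ of $A_{r}$ vanishes at $y$, and its Taylor expansion is governed by iterated covariant derivatives of $F_{A_{r}^{t}}(y)$, so the bound \eqref{eqn:keyequation} puts each matrix entry of $\omega_{r}$ into $W_{y}^{1/2}$: every factor of size $r$ is accompanied by at least one factor of the normal coordinate $y$. Reading off directly from \eqref{eq: space for heat coeffs.}, one checks that $\{W_{y}^{j}\}_{j\in\frac12\mathbb{N}_{0}}$ forms a filtered algebra under pointwise product, that $\partial_{y}$ and multiplication by $\omega_{r}$ each raise the grading by $1/2$, that multiplication by $c(F_{A_{r}^{t}})$ raises it by $1$, and that radial integrals of the form $f\mapsto\int_{0}^{1}u^{n}f(uy,y)\,du$ preserve each $W_{y}^{j}$. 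Via the Weitzenb\"ock formula, $D_{A_{r}}^{2}$ therefore raises the grading by exactly $1$.

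With the filtration machinery in place, an induction on $n$ using the Berline--Getzler--Vergne recursion
\[
b_{n+1}^{r}(v,y)=-|J|^{-1/2}(v)\int_{0}^{1}u^{n}\bigl(|J|^{1/2}D_{A_{r}}^{2}b_{n}^{r}\bigr)(uv,y)\,du,
\]
with $|J|$ the $r$-independent Jacobian of $\exp_{y}$, places the matrix entries of $b_{n}^{r}$ in $W_{y}^{n}$; the base case $b_{0}^{r}=\mathrm{Id}$ in the synchronous frame has constant matrix entries, which lie in $G_{y}^{0}\subset W_{y}^{0}$. The support statement of Lemma~\ref{lem:summary} lets us carry out this local analysis inside $B_{y}(i_{g^{TY}}/2)$ without worrying about global extension. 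The main obstacle is the filtration bookkeeping: one must verify that every $y$-derivative landing on an $r$-dependent coefficient produces growth of exactly the $r^{1/2}$ order compensated by the $y^{\alpha}$-factors in \eqref{eq: space for heat coeffs.}. This is by design of the spaces $W_{y}^{j}$, and once closure under the four relevant operations (multiplication, $\partial_{y}$, multiplication by $\omega_{r}$, and radial integration) is checked, the remainder of the argument is mechanical.
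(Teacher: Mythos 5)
Your proposal follows essentially the same route as the paper: a diamagnetic/Duhamel-type comparison for the Gaussian bound (the paper outsources this to \cite[Prop.\ 3.1]{Savale-Asmptotics}), and for the second assertion, the Berline--Getzler--Vergne recursion in a synchronous frame combined with the observation that the radial-gauge connection coefficients lie in $W_{y}^{1/2}$ by \eqref{eqn:keyequation}, so that $D_{A_{r}}^{2}$ raises the filtration degree by one and induction closes. One small imprecision: in the paper's normalization $b_{0}^{r}$ carries the $g^{-1/4}$ factor and is not the identity, but it is $r$-independent and smooth, so its entries still lie in $G_{y}^{0}\subset W_{y}^{0}$ and the base case is unaffected.
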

\begin{proof}
The first bullet point is similar to \cite[Prop. 3.1]{Savale-Asmptotics}.

To prove the second bullet point, we use the fact that the terms $b_{j}^{r}$
in the heat kernel expression \eqref{eqn:asymptoticexpansion} are
known to satisfy a recursion, as alluded to above, and explained in
the proof of \cite[Thm. 2.30]{Berline-Getzler-Vergne}. Specifically,
fix $y\in Y$, choose geodesic coordinates $y$ around $y$, mapping
$0$ to $y$, and choose a synchronous frame as in \eqref{eq:coefficient function heat ker}.
Then, in these coordinates, we have 
\begin{equation}
b_{0}^{r}(x,y)=\sum_{i=1}^{2}g^{-1/4}(x)(s_{i}\otimes e)(x)(s_{i}\otimes e)^{*}(y),\label{eqn:base}
\end{equation}
where $g=\det\left(g_{jk}\right)$. Moreover, if use these coordinates
to identify sections of $S\otimes S_{y}^{*}$ with a vector of functions,
then we have 
\begin{equation}
b_{j}^{r}(x,y)=-\frac{1}{g^{1/4}(x)}\int_{0}^{1}\rho^{j-1}g^{1/4}\left(\rho x\right)D_{A_{r}}^{2}b_{j-1}^{r}\left(\rho x,y\right)d\rho,\quad j\geq1\label{eqn:recursion}
\end{equation}
where on the right hand side of this equation, we mean that we are
integrating this vector of functions component by component.

Now recall the Bochner-Lichnerowicz-Weitzenbock formula for the Dirac
operator: 
\begin{equation}
D_{A_{r}}^{2}=\nabla_{A_{r}}^{*}\nabla_{A_{r}}+\frac{\kappa}{4}-\frac{1}{2}c(*F_{A_{r}}),\label{eqn:weitzenboch}
\end{equation}
where $\kappa$ denotes the scalar curvature; we will want to combine
this with \eqref{eqn:recursion}. In coordinates, we have 
\begin{equation}
\nabla_{A_{r}}=(\partial_{1}+\Gamma_{1},\partial_{2}+\Gamma_{2},\partial_{3}+\Gamma_{3}),\label{eqn:nabla}
\end{equation}
where each $\Gamma_{i}$ is the $i^{th}$ Christoffel symbol for $A_{r}$.
We also have 
\begin{equation}
\nabla_{A_{r}}^{*}=-\sum_{j,k}^{3}-g^{jk}(\partial_{k}+\Gamma_{k})+\sum_{i,j,k}^{3}g^{jk}\Gamma_{jk}^{i},\label{eqn:nabla*}
\end{equation}
where the $\Gamma_{j,k}^{i}$ are the Christoffel symbols of the Riemannian
metric. Since we have $A_{r}=1\otimes A+A_{c}\otimes1$, where $A_{c}$
is the canonical connection on $S^{\xi}$, we can decompose each Christoffel
symbol 
\begin{equation}
\Gamma_{i}=c_{i}+a_{i},\label{eqn:decompose}
\end{equation}
where the $c_{i}$ are Christoffel symbols for $A_{c}$ and the $a_{i}$
are Christoffel symbols for $A$.

The $c_{i}$ are independent of $r$. To understand the $a_{j}$,
first write the defining equations for the curvature 
\[
F_{kj}=\partial_{k}a_{j}-\partial_{j}a_{k}.
\]
Now write the coordinate $x=(x^{1},x^{2},x^{3})$, and consider $\sum_{k=1}^{3}x^{k}(\partial_{k}a_{j}-\partial_{j}a_{k}).$
Reintroducing the radial coordinate $\rho$, we have 
\[
\rho\sum_{k=1}^{3}x^{k}\partial_{k}a_{j}=\frac{\partial a_{j}}{\partial\rho}.
\]
On the other hand, since the frame $e$ is parallel, we have $\nabla_{x^{1}\partial_{x_{1}}+x^{2}\partial_{x_{2}}+x^{3}\partial_{x_{3}}}e=\nabla_{\rho\partial_{\rho}}e=0$,
hence $\sum_{k=1}^{3}x^{k}a_{k}=0$. Thus, we have 
\begin{equation}
a_{j}\left(x\right)=\sum_{k=1}^{3}\int_{0}^{1}d\rho\rho x^{k}F_{kj}\left(\rho x\right).\label{eq: connection coefficient}
\end{equation}
In particular, it follows from the a priori estimate \eqref{eqn:keyequation}
and \eqref{eq: connection coefficient} that each 
\begin{equation}
a_{j}\in W_{y}^{\frac{1}{2}}.\label{eqn:keyequation}
\end{equation}

Now note that we have $W_{y}^{j}+W_{y}^{k}\subset W_{y}^{\max\left\{ j,k\right\} }$,
$W_{y}^{j}\cdot W_{y}^{k}\subset W_{y}^{j+k}$, and $\partial_{y}W_{y}^{j}\subset W_{y}^{j+\frac{1}{2}}$.
Hence, by \eqref{eqn:weitzenboch}, \eqref{eqn:nabla}, \eqref{eqn:nabla*},
and \eqref{eqn:keyequation}, we have that the square of the Dirac
operator has the schematic form 
\begin{equation}
D_{A_{r}}^{2}=\sum_{j,k}-g^{jk}\partial_{j}\partial_{k}+P_{j}\partial_{j}+Q\label{eq: Dirac form}
\end{equation}
where $P_{j}\in W^{\frac{1}{2}}$ and $Q\in W_{y}^{1}$. The Lemma
now follows by induction, using \eqref{eqn:base} and \eqref{eqn:recursion}. 
\end{proof}
We now give the promised: 
\begin{proof}[Proof of Lemma~\ref{lem:keyestimates}]

The second bullet point follows by combining \eqref{eqn:traceformula}
and \eqref{eq: basic heat kernel estimate}.

To prove the first bullet point, our strategy will be to bound the
pointwise size of the kernel $L_{t}^{r}(y,y)$ and appeal to the version
of \eqref{eqn:traceformula} for $L_{t}^{r}$.

To do this, consider the asymptotic expansion \eqref{eqn:lasymptotic}.
By a theorem of Bismut-Freed (\cite[Thm. 2.4]{Bismut-Freed-II}),
for any $y\in Y$, $\textrm{tr}L_{t}^{r}(y,y)$ is $O\left(t^{1/2}\right)$
as $t\to0$. So we have $\textrm{tr}L_{t}^{r}(y,y)=\textrm{tr }R_{t}^{r}\left(y,y\right)$
for the remainder 
\begin{align*}
R_{t}^{r}(x,y)\coloneqq & L_{t}^{r}-D_{A_{r}}\left[h_{t}\left(b_{0}+tb_{1}\right)\right].
\end{align*}
By \eqref{eqn:nonhom}, $R_{t}^{r}$ satisfies the inhomogeneous heat
equation 
\[
\left(\partial_{t}+D_{A_{r}}^{2}\right)R_{t}^{r}(x,y)=h_{t}t\left\{ -D_{A_{r}}^{3}b_{1}+c\left(\frac{\rho d\rho}{2t}\right)D_{A_{r}}^{2}b_{1}\right\} ,
\]
and by the third bullet point of Lemma~\ref{lem:summary}, $R_{t}^{r}\to0$
as $t\to0$. We can then apply Duhamel's principle \eqref{eqn:duhamel}
to write 
\begin{equation}
R_{t}^{r}(x,y)=\int_{0}^{t}e^{-(t-s)D_{A_{r}}^{2}}h_{s}(x,y)s\underbrace{\left\{ -D_{A}^{3}b_{1}+c\left(\frac{\rho d\rho}{2s}\right)D_{A}^{2}b_{1}\right\} }_{\eqqcolon K_{s}}ds.\label{eq: duhamel application}
\end{equation}
We can then apply the key property of the heat kernel \eqref{eqn:kernelproperty}
to write 
\[
R_{t}^{r}(x,y)=\int_{0}^{t}\int_{Y}H_{t-s}^{r}(x,z)h_{s}(z,y)sK_{s}(z,y)\textrm{vol}(z)ds,
\]
and we can apply the second bullet point of Lemma~\ref{lem:summary}
to conclude that 
\[
|R_{t}^{r}(y,y)|\le c_{0}\int_{0}^{t}\int_{Y}e^{c_{0}r(t-s)}h_{s}(z,y)h_{2(t-s)}(z,y)sK_{s}(z,y)\textrm{vol}(z)ds.
\]
By the first bullet point of Lemma~\ref{lem:summary}, we can assume
that $K_{s}(z,y)$ is supported in $B_{y}\left(\frac{i_{g^{TY}}}{2}\right)\times\lbrace y\rbrace$.
Thus, we just have to bound 
\begin{equation}
\int_{0}^{t}\int_{B_{y}\left(\frac{i_{g^{TY}}}{2}\right)}e^{c_{0}r(t-s)}h_{s}(y,0)h_{2(t-s)}(y,0)sK_{s}(y,0)dyds,\label{eqn:thingtobound}
\end{equation}
where $y$ are geodesic coordinates centered at $y$. To do this,
choose a synchronous frame for the spinor bundle, as we have been
doing above. Then, following \eqref{eqn:decompose}, \eqref{eq: connection coefficient},
in these coordinates the Dirac operator is seen to have the form 
\[
D_{A_{r}}=w^{jk}\partial_{j}+K,
\]
for $r-$independent $w^{jk}$ and $K\in W_{y}^{\frac{1}{2}}$, in
the geodesic coordinates and orthonormal frame introduced before.
Combining this with the second bullet point of Lemma~\ref{lem:technical}
gives that the term $K_{s}\in W_{y}^{\frac{5}{2}}$. So, \eqref{eqn:thingtobound}
is dominated by a finite sum of integrals of the form 
\[
\int_{0}^{t}ds\int_{B_{y}\left(\frac{i_{g^{TY}}}{2}\right)}dy\,se^{c_{0}rt}h_{2(t-s)}\left(y,0\right)h_{s}\left(y,0\right)y^{\alpha}r^{k},\quad k\leq\frac{5}{2}+\frac{\left|\alpha\right|}{2}.
\]

On $B_{y}\left(\frac{i_{g^{TY}}}{2}\right),$ we have 
\[
y^{I}h_{t}\left(y,0\right)\leq c_{1}t^{\frac{1}{2}|I|}h_{2t}\left(y,0\right),
\]
for some constant $c_{1}$. Hence, we can bound the above integral
by 
\begin{equation}
c_{1}r^{k}e^{c_{0}rt}\int_{0}^{t}ds\int_{B_{y}\left(\frac{i_{g^{TY}}}{2}\right)}dy\,s^{1+\frac{|\alpha|}{2}}h_{2(t-s)}\left(y,0\right)h_{2s}\left(y,0\right)dy.\label{eqn:tobound2}
\end{equation}
We also have 
\[
\int_{Y}h_{t}(x,y)h_{t'}(y,z)dy\le c_{2}h_{4(t+t')}(x,z)
\]
as proved in \cite[Sec. A]{Savale-Asmptotics}. So, we can bound \eqref{eqn:tobound2}
by 
\[
c_{3}r^{k}e^{c_{0}rt}\int_{0}^{t}s^{1+\frac{|\alpha|}{2}}h_{8t}(0,0)ds=c_{3}r^{k}e^{c_{0}rt}\int_{0}^{t}s^{1+\frac{|\alpha|}{2}}(4t)^{-3/2}ds\le c_{3}r^{k}t^{\frac{1+|\alpha|}{2}}e^{c_{0}rt}.
\]
We know that $k-\frac{1+|\alpha|}{2}\le2$. So, putting all of the
above together, \eqref{eqn:thingtobound} is dominated by a finite
sum of terms of the form 
\[
r^{2}r^{\frac{1+|\alpha|}{2}}t^{\frac{1+|\alpha|}{2}}e^{c_{0}rt}
\]
which proves the result. 
\end{proof}
We can finally give: 
\begin{proof}[Proof of Proposition~\ref{prop:keyprop}]
Define $E(x)\coloneqq\text{sign}(x)\text{erfc}(|x|)=\text{sign}(x)\cdot\frac{2}{\sqrt{\pi}}\int_{|x|}^{\infty}e^{-s^{2}}ds<e^{-x^{2}}$.
This is a rapidly decaying function, so the function $E(D_{A_{r}})$
is defined, and its trace is a convergent sum 
\[
\textrm{tr}(E(D_{A_{r}}))=\sum_{\lambda}E(\lambda),
\]
where $\lambda$ is an eigenvalue of $D_{A_{r}}.$ The eta invariant
in unchanged under positive rescaling 
\[
\eta\left(D_{A_{r}}\right)=\eta\left(\frac{1}{\sqrt{r}}D_{A_{r}}\right).
\]
Now use \eqref{eqn:integralformula} to rewrite the right hand side
of the above equation as 
\[
\left|\int_{0}^{1}dt\frac{1}{\sqrt{\pi t}}\textrm{ tr}\left[\frac{1}{\sqrt{r}}D_{A_{r}}e^{-\frac{t}{r}D_{A_{r}}^{2}}\right]+\textrm{tr}\,E\left(\frac{1}{\sqrt{r}}D_{A_{r}}\right)\right|.
\]
The absolute value of the first summand in the above expression is
bounded from above by a constant multiple of $r^{3/2}$, by the first
bullet point in Lemma~\ref{lem:keyestimates}. The absolute value
of the second summand in the same expression is bounded from above
by $\textrm{tr}\,e^{-\frac{1}{r}D_{A_{r}}^{2}}$, which by the second
bullet point in Lemma~\ref{lem:keyestimates} is bounded by a constant
multiple of $r^{3/2}$ as well. 
\end{proof}
\begin{proof}[Proof of Proposition~\ref{prop: irred grading est}]
An application of the Atiyah-Patodi-Singer index theorem as in \S\ref{sec:Absolute--grading}
gives 
\[
\frac{1}{2}\eta\left(\widehat{\mathcal{H}}_{\left(A_{r},\Psi_{r}\right)}\right)=\frac{1}{2}\eta\left(D_{A_{r}}\right)+\frac{1}{2}\eta_{Y}+\textrm{sf}\left\{ \widehat{\mathcal{H}}_{\left(A_{r},\epsilon\Psi_{r}\right)}\right\} _{0\leq\varepsilon\leq1}.
\]
The spectral flow term above is estimated to be $O\left(r^{3/2}\right)$
as in \cite[S 5.4]{Taubes-Weinstein} while $\eta\left(D_{A_{r}}\right)=O\left(r^{3/2}\right)$
by Proposition~\ref{prop:keyprop}. 
\end{proof}
We also note that the constant in Proposition~\ref{prop: irred grading est}
above is only a function of $\left(Y,\lambda,J\right)$ and independent
of the class $\sigma=\left[\left(A_{r},\Psi_{r}\right)\right]\in\widecheck{HM}\left(-Y,\mathfrak{s}^{E}\right)$
defined by the Seiberg-Witten solution. 
\begin{rem}
\label{rmk:whycantimprove}

The reason that we can not improve upon $\textrm{gr}^{\mathbb{Q}}$
asymptotics is because we do not know how to strengthen the $O\left(r^{3/2}\right)$
spectral flow estimate on the irreducible solutions of Propositions
~\ref{prop: irred grading est} or \ref{prop:keyprop}. A better
$O(r)$ estimate does however exist \cite{Savale-Asmptotics,Savale-Gutzwiller}
for reducible solutions for which one understands the connection precisely
in the limit $r\rightarrow\infty$. However, the a priori estimates
\eqref{eqn:keyequation} are not strong enough to carry out the same
for irreducibles. 
\end{rem}

\section{\label{sec:Asymptotics-of-capacities}Asymptotics of capacities}

\subsection{The main theorem}

In this section we now prove our main theorem Theorem~\ref{thm:main}
on ECH capacities. 
\begin{proof}[Proof of Theorem~\ref{thm:main}]

Let $0\neq\sigma_{j}\in ECH\left(Y,\lambda,\Gamma\right)$, $j=0,1,2,\ldots$,
be a sequence of non-vanishing classes with definite gradings $\textrm{gr}^{\mathbb{Q}}(\sigma_{j})$
tending to positive infinity. As in \S\ref{subsec:ECH=00003D00003D00003D00003D00003DHM},
we use the perturbed Chern-Simons-Dirac functional \eqref{eq:perturbed CSD}
$\mathcal{L}_{\mu}$ and its gradient flow \eqref{eq: Seiberg Witten equations blowup}
with $\mu=ir\lambda$, $r\in\left[0,\infty\right)$, in defining monopole
Floer homology. Hence for each $r\in\left[1,\infty\right)$, the class
$\sigma_{j}$ may be represented by a formal sum of solutions to \eqref{eq: Seiberg Witten equations blowup}
with $\mu=ir\lambda$. As noted in \S\ref{subsec:ECH=00003D00003D00003D00003D00003DHM},
this solution is eventually irreducible. Without loss of generality
we may assume 
\[
\textrm{gr}^{\mathbb{Q}}(\sigma_{j})=q+j,
\]
where $q$ is a fixed rational number and $j\in2\mathbb{N}$.

We now estimate $r_{1}\left(j\right)$, the infimum of the values
of $r$ such that each solution $\left[\mathfrak{a}_{j}\right]_{r}$
to \eqref{eq: Seiberg Witten equations blowup} representing $\sigma_{j}$
is irreducible. For this note that a reducible solution is of the
form $\mathfrak{a}=\left(A,0,\Phi_{k}\right)$ where $A=A_{0}-ir\lambda$,
$A_{0}^{t}$ flat and $\Phi_{k}$ the $k$th positive eigenvector
of $D_{A}$. The grading of such a reducible is given by \eqref{eq:absolute grading}.
The important estimate $\left|\eta_{D_{A_{1}+r\lambda}}\right|\leq c_{0}r$,
(\cite[Thm. 1.2]{Savale2017-Koszul}) now shows $\textrm{gr}^{\mathbb{Q}}\left[\left(A,0,\Phi_{k}\right)\right]>\textrm{gr}^{\mathbb{Q}}\left(\sigma_{j}\right)=q+j$
for 
\[
r>\bar{r}_{1}\left(j\right)\coloneqq\sup\left\{ r|\frac{r^{2}}{4\pi^{2}}\textrm{vol}\left(Y,\lambda\right)<c_{0}r+q+j\right\} .
\]
Hence $r_{1}\left(j\right)<\bar{r}_{1}\left(j\right)$. Furthermore
\begin{equation}
\bar{r}_{1}\left(j\right)=2\pi\left[\frac{j}{\textrm{vol}\left(Y,\lambda\right)}\right]^{1/2}+O\left(1\right)\textrm{ as }j\rightarrow\infty\label{eq:asymptotics of r1}
\end{equation}
from the above definition. A max-min argument, as also mentioned in
\S\ref{subsec:ECH=00003D00003D00003D00003D00003DHM} then gives $\forall j\in2\mathbb{N}$
a piecewise-smooth family of irreducible solutions $\left[\mathfrak{a}\right]_{r}=\left(A_{r},\Psi_{r}\right)$
, $r>r_{1}\left(j\right)$, of fixed grading $\textrm{gr}^{\mathbb{Q}}\left[\mathfrak{a}\right]=q+j$
such that $\mathcal{L}_{\mu}$ is continuous, see (for example) \cite[S 2.6]{Cristofaro-Gardiner-Hutchings-Gripp2015}. 

By \eqref{eq: CS 4/3 est}, we have 
\begin{equation}
\left|CS\left(A\right)\right|\leq c_{0}r^{2/3}e_{\lambda}\left(A\right)^{4/3}.\label{eq: CS vs E}
\end{equation}
In addition, by combining \eqref{eq:absolute grading} and Proposition~\ref{prop:keyprop},
we have 
\begin{equation}
\left|\frac{1}{2\pi^{2}}CS\left(A_{r}\right)-(q+j)\right|\leq c_{0}r^{3/2},\label{eq: irred. spectral flow}
\end{equation}
with the constant $c_{0}>0$ being independent of the grading $j$.
We also have the differential relation 
\[
r\frac{de_{\lambda}}{dr}=\frac{dCS}{dr}
\]
between the two functionals, away from the discrete set of points
where derivatives are undefined, see \cite[Lem. 2.5]{Cristofaro-Gardiner-Hutchings-Gripp2015}.
Now define $F\left(r\right)=\frac{1}{2}r_{1}^{2}\textrm{vol}\left(Y,\lambda\right)+\int_{r_{1}}^{r}e_{\lambda}\left(A_{s}\right)ds$.
This is a continuous function, and $v$ is continuous as well, so
we may integrate the above equation to conclude that 
\begin{align*}
CS\left(r\right) & =rF'-F
\end{align*}
valid for all $r$ away from the above discrete set; here, we have
used \cite[Property 2.3.(i)]{SunW-2018}, together with the computation
in \cite[Lem. 2.3]{Cristofaro-Gardiner-Hutchings-Gripp2015} in the
computation of the terms at $r_{1}$.


On account of \eqref{eq: irred. spectral flow}, $F$ is then a super/subsolution
to the ODEs 
\[
-c_{2}r^{3/2}\leq rF'-F-\left(q+j\right)\leq c_{2}r^{3/2}
\]
for $r\geq r_{1}$. This gives 
\begin{align}
\frac{1}{2}r_{1}^{2}\textrm{vol}\left(Y,\lambda\right)+r\left[\frac{q+j}{r_{1}}-\frac{q+j}{r}-2c_{2}r^{1/2}+2c_{2}r_{1}^{1/2}\right]\leq & F\nonumber \\
 & \parallel\nonumber \\
\frac{1}{2}r_{1}^{2}\textrm{vol}\left(Y,\lambda\right)+r\left[\frac{q+j}{r_{1}}-\frac{q+j}{r}+2c_{2}r^{1/2}-2c_{2}r_{1}^{1/2}\right]\geq & F\nonumber \\
\frac{1}{2r}r_{1}^{2}\textrm{vol}\left(Y,\lambda\right)+\frac{q+j}{r_{1}}-3c_{2}r^{1/2}+2c_{2}r_{1}^{1/2}\leq & F'\nonumber \\
 & \parallel\nonumber \\
\frac{1}{2r}r_{1}^{2}\textrm{vol}\left(Y,\lambda\right)+\frac{q+j}{r_{1}}+3c_{2}r^{1/2}-2c_{2}r_{1}^{1/2}\geq & F'.\label{eq: subsolution estimates}
\end{align}

Next the estimate \eqref{eq: CS vs E} in terms of $F$ is 
\begin{equation}
-c_{1}r^{2/3}\left(F'\right)^{4/3}\leq rF'-F\leq c_{1}r^{2/3}\left(F'\right)^{4/3}.\label{eq:diff ineq F'}
\end{equation}
We let $\rho_{0}$ be the smallest positive root of $\frac{1}{3}-\left[\rho+\rho^{2}+\rho^{3}+\rho^{4}\right]=0$
and define 
\begin{align*}
\bar{r}_{2}\left(j\right) & =\sup\left\{ r|c_{1}r^{-2/3}F^{1/3}\geqslant\rho_{0}\right\} 
\end{align*}
which is finite on account of \eqref{eq: subsolution estimates}.
Further with $c_{3}=1+3\left(\frac{2c_{1}}{3}\right)+3\left(\frac{2c_{1}}{3}\right)^{2}$
define 
\begin{align}
\tilde{r}_{2}\left(j\right) & =\sup\left\{ r|\frac{1}{2r}r_{1}^{2}\textrm{vol}\left(Y,\lambda\right)+\frac{q+j}{r_{1}}+3c_{2}r^{1/2}-2c_{2}r_{1}^{1/2}\geqslant\left(\frac{3}{4c_{1}}\right)^{3}r\right.\nonumber \\
 & \qquad\quad\textrm{or}\quad\frac{1}{2r}r_{1}^{2}\textrm{vol}\left(Y,\lambda\right)+\frac{q+j}{r_{1}}-\frac{q+j}{r}+2c_{2}r^{1/2}-2c_{2}r_{1}^{1/2}\geq\left(\frac{1}{9c_{3}}\right)^{3}r\nonumber \\
 & \qquad\quad\textrm{or}\quad\left.\frac{1}{2r}r_{1}^{2}\textrm{vol}\left(Y,\lambda\right)+\frac{q+j}{r_{1}}-\frac{q+j}{r}+2c_{2}r^{1/2}-2c_{2}r_{1}^{1/2}\geq r\right\} \label{eq: def r2 ttilde}
\end{align}
and set $r_{2}\left(j\right)\coloneqq\max\left\{ \bar{r}_{2}\left(j\right),\tilde{r}_{2}\left(j\right)\right\} $.
We note that $r_{2}\left(j\right)=O\left(j^{1/2}\right)$. We now
have the following lemma. 
\begin{lem}
For $r>r_{2}\left(j\right)$ we have 
\begin{equation}
\left(\frac{F}{r}\right)^{1/3}-\frac{2c_{1}}{3r}F^{2/3}\leq\left(F'\right)^{1/3}\leq\left(\frac{F}{r}\right)^{1/3}+\frac{2c_{1}}{3r}F^{2/3}.\label{eq: diff ineq. 2}
\end{equation}
\end{lem}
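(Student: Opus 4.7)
The plan is to transform the quadratic-in-$F'$ inequality \eqref{eq:diff ineq F'} into the linear-in-cube-roots statement \eqref{eq: diff ineq. 2} by factoring $a^{3}-b^{3}=(a-b)(a^{2}+ab+b^{2})$, and then to execute a short bootstrap argument that uses the various inequalities defining $r_{2}(j)$.

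Concretely, set $u:=(F')^{1/3}$, $v:=(F/r)^{1/3}$ and $w:=u/v$. Then \eqref{eq:diff ineq F'} becomes $|u^{3}-v^{3}|\leq c_{1}r^{-1/3}u^{4}$, and since $F^{2/3}/r=r^{-1/3}v^{2}$, the target \eqref{eq: diff ineq. 2} reads $|w-1|\leq \tfrac{2}{3}\delta'$, where $\delta':=c_{1}r^{-1/3}v$. Factoring $u^{3}-v^{3}$ gives
\[
|w-1| \;=\; \frac{|w^{3}-1|}{w^{2}+w+1} \;\leq\; \frac{\delta' w^{4}}{w^{2}+w+1},
\]
so it suffices to establish the polynomial inequality $3w^{4}\leq 2(w^{2}+w+1)$, which holds precisely on $[0,w_{*}]$ for the unique positive root $w_{*}\approx 1.27$ of $3w^{4}=2(w^{2}+w+1)$.

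The main remaining task is the bootstrap showing $w\leq w_{*}$ for $r>r_{2}(j)$. Rewriting $|u^{3}-v^{3}|\leq c_{1}r^{-1/3}u^{4}$ as $|w^{3}-1|\leq\alpha w^{3}$ with $\alpha:=c_{1}r^{-1/3}u=\delta' w$ yields
\[
(1+\alpha)^{-1/3} \;\leq\; w \;\leq\; (1-\alpha)^{-1/3}, \qquad \alpha<1.
\]
Two ingredients from $r>r_{2}(j)$ feed this: the condition $r>\tilde{r}_{2}(j)$, together with the upper estimate in \eqref{eq: subsolution estimates}, forces $F'<\bigl(3/(4c_{1})\bigr)^{3}r$, hence $\alpha<3/4$ and a preliminary bound $w<4^{1/3}$; the condition $r>\bar{r}_{2}(j)$ provides $\delta'<\rho_{0}$. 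Feeding these through $\alpha=\delta' w$ and $w\leq(1-\alpha)^{-1/3}$ tightens the bound on $w$ toward $1$.

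The technical heart, and what I expect to be the main obstacle, is calibrating this iteration so that $w$ lands below $w_{*}$. The defining relation $\rho_{0}+\rho_{0}^{2}+\rho_{0}^{3}+\rho_{0}^{4}=1/3$ is tuned precisely for this: the four-term polynomial controls the relevant truncation of $(1-\delta' w)^{-1/3}$ at the order needed to drive $w$ past $w_{*}$ after finitely many iterations, using $\delta'<\rho_{0}$ and the initial bound $w<4^{1/3}$. Once $w\leq w_{*}$ is established, the polynomial inequality $3w^{4}\leq 2(w^{2}+w+1)$ holds, and the factoring identity above yields \eqref{eq: diff ineq. 2}.
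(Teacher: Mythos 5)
Your proposal is a genuinely different route from the paper, but it has a real gap at exactly the step you flag as the ``technical heart.''

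The paper's argument does not factor cubes at all. Writing $y=(F')^{1/3}$, it rewrites \eqref{eq:diff ineq F'} as the pair of quartic inequalities $0\le c_1 r^{-1/3}y^4\mp y^3\pm r^{-1}F$, plugs the candidate endpoints $y_0^\pm=(F/r)^{1/3}\pm\frac{2c_1}{3r}F^{2/3}$ into the appropriate quartic, and computes the result exactly: it comes out to $\mp r^{-5/3}c_1 F^{4/3}$ times a bracket of the form $1-\frac{4}{3}\rho-\frac{64}{27}\rho^2-\frac{32}{27}\rho^3-\frac{16}{81}\rho^4$ (and a similar one on the other side), where $\rho=c_1 r^{-2/3}F^{1/3}$ is exactly your $\delta'$. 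Each coefficient in the bracket is $\le 3$, so the condition $\rho+\rho^2+\rho^3+\rho^4<\frac13$ — which is precisely what $r>\bar r_2(j)$ guarantees via the definition of $\rho_0$ — forces the bracket to be positive, hence the quartic is negative at $y_0^\pm$. Since the quartic $c_1 r^{-1/3}y^4-y^3+r^{-1}F$ is decreasing on $(0,\frac{3}{4c_1}r^{1/3})$ and the easy inequality \eqref{eq: easy inequality} places $y$ strictly in that interval, negativity at $y_0^+$ forces $y\le y_0^+$, and symmetrically $y\ge y_0^-$. This is an exact algebraic verification with no iteration.

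Your factoring of $u^3-v^3$ and the reduction to $3w^4\le 2(w^2+w+1)$, i.e.\ $w\le w_*$, is correct as far as it goes, and your preliminary bound $w<4^{1/3}$ from the easy inequality is right. But the step ``calibrating this iteration so that $w$ lands below $w_*$ after finitely many iterations'' is not actually carried out: you assert that the relation $\rho_0+\rho_0^2+\rho_0^3+\rho_0^4=\frac13$ is ``tuned precisely for this,'' but that identity is tuned for the paper's quartic-bracket computation (it makes the bracket coefficients, all $\le 3$, sum below $1$), not for bounding the fixed point of $w\mapsto(1-\delta' w)^{-1/3}$ by $w_*$. Indeed one iteration does work numerically ($\rho_0\approx 0.25$, so $\alpha<\rho_0\cdot 4^{1/3}\approx 0.40$, giving $w<(0.60)^{-1/3}\approx 1.18<w_*\approx 1.27$), but nothing in your write-up establishes this, and the connection you claim to the defining polynomial of $\rho_0$ does not hold. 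To close the argument you would need either to carry out this numerical estimate explicitly or to find an exact algebraic reason that $(1-\rho_0\cdot 4^{1/3})^{-1/3}\le w_*$; as written, the crucial quantitative step is an unproven assertion.
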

\begin{proof}
By definition, 
\begin{align*}
 & r>r_{2}\left(j\right)\geq\bar{r}_{2}\left(j\right)\\
\implies & \rho\coloneqq c_{1}r^{-2/3}F^{1/3}<\rho_{0}\\
\implies & \rho+\rho^{2}+\rho^{3}+\rho^{4}<\frac{1}{3}
\end{align*}
as well as 
\begin{align}
 & r>r_{2}\left(j\right)\geq\tilde{r}_{2}\left(j\right)\nonumber \\
\implies & F'\leq\frac{1}{2r}r_{1}^{2}\textrm{vol}\left(Y,\lambda\right)+\frac{q+j}{r_{1}}+3c_{2}r^{1/2}-2c_{2}r_{1}^{1/2}<\left(\frac{3}{4c_{1}}\right)^{3}r\label{eq: easy inequality}
\end{align}
by \eqref{eq: subsolution estimates}.

For $y=\left(F'\right)^{1/3}$ equations \eqref{eq:diff ineq F'}
become the pair of quartic inequalities 
\begin{align}
0 & \leq c_{1}r^{-1/3}y^{4}-y^{3}+r^{-1}F\label{eq:quartic1}\\
0 & \leq c_{1}r^{-1/3}y^{4}+y^{3}-r^{-1}F\label{eq:quartic2}
\end{align}
With $y_{0}^{\pm}=\left(\frac{F}{r}\right)^{1/3}\pm\frac{2c_{1}}{3r}F^{2/3}$
we calculate 
\begin{align*}
 & c_{1}r^{-1/3}\left(y_{0}^{+}\right)^{4}-\left(y_{0}^{+}\right)^{3}+r^{-1}F\\
= & -r^{-5/3}c_{1}F^{4/3}\left[1-\frac{4}{3}\rho-\frac{64}{27}\rho^{2}-\frac{32}{27}\rho^{3}-\frac{16}{81}\rho^{4}\right]<0\quad\textrm{and }\\
 & c_{1}r^{-1/3}\left(y_{0}^{-}\right)^{4}+\left(y_{0}^{-}\right)^{3}-r^{-1}F\\
= & r^{-5/3}c_{1}F^{4/3}\left[-1-\frac{4}{3}\rho+\frac{64}{27}\rho^{2}-\frac{32}{27}\rho^{3}+\frac{16}{81}\rho^{4}\right]<0.
\end{align*}
Since the minimum of the quartic \eqref{eq:quartic1} is attained
at $y_{\textrm{min}}=\left(F'\right)^{1/3}=\frac{3}{4c_{1}}r^{1/3}$,
this gives $\left(F'\right)^{1/3}=y\leq y_{0}^{+}$ or $\left(F'\right)^{1/3}=y\geq y_{\textrm{min}}=\frac{3}{4c_{1}}r^{1/3}$.
The second possibility being disallowed on account of \eqref{eq: easy inequality},
gives the desired upper bound of \eqref{eq: diff ineq. 2}. Similarly,
the minimum of quartic \eqref{eq:quartic2} is attained at the negative
$y_{\textrm{min}}=-\frac{3}{4c_{1}}r^{1/3}$. Hence $y_{0}^{-}\leq y=\left(F'\right)^{1/3}$
which is the lower bound in \eqref{eq: diff ineq. 2}. 
\end{proof}
Next we cube \eqref{eq: diff ineq. 2} and use \eqref{eq: def r2 ttilde}
to obtain 
\begin{equation}
\frac{F}{r}-c_{3}\frac{F^{4/3}}{r^{5/3}}\leq F'\leq\frac{F}{r}+c_{3}\frac{F^{4/3}}{r^{5/3}}\label{eq:diff ineq. F 3}
\end{equation}
for $r\geq r_{2}\left(j\right)$. This gives 
\begin{align}
r^{1/3}\left[\frac{r^{1/3}}{-c_{3}+c_{0}^{-}r^{1/3}}\right]\leq F^{1/3} & \leq r^{1/3}\left[\frac{r^{1/3}}{c_{3}+c_{0}^{+}r^{1/3}}\right],\quad\textrm{ where }\nonumber \\
c_{0}^{\pm} & =\frac{r_{3}^{1/3}\mp c_{3}\left[\frac{F\left(r_{3}\right)}{r_{3}}\right]^{1/3}}{F\left(r_{3}\right)^{1/3}}\label{eq: final estimate F}
\end{align}
for $r\geq r_{3}\left(j\right)\geq r_{2}\left(j\right)$. The last
equation with \eqref{eq:diff ineq. F 3} gives 
\begin{align*}
\frac{F\left(r_{3}\right)}{\left(r_{3}^{1/3}+c_{3}\left[\frac{F\left(r_{3}\right)}{r_{3}}\right]^{1/3}\right)^{3}}=\frac{1}{\left(c_{0}^{-}\right)^{3}}\leq & \underbrace{F'\left(r\right)}_{=e_{\lambda}\left(A_{r}\right)}\leq\frac{1}{\left(c_{0}^{+}\right)^{3}}=\frac{F\left(r_{3}\right)}{\left(r_{3}^{1/3}-c_{3}\left[\frac{F\left(r_{3}\right)}{r_{3}}\right]^{1/3}\right)^{3}}
\end{align*}
and hence 
\[
\frac{F\left(r_{3}\right)}{r_{3}}\left[1-\frac{4c_{3}}{r_{3}^{1/3}}\left(\frac{F\left(r_{3}\right)}{r_{3}}\right)^{1/3}\right]\leq e_{\lambda}\left(A_{r}\right)\leq\frac{F\left(r_{3}\right)}{r_{3}}\left[1+\frac{4c_{3}}{r_{3}^{1/3}}\left(\frac{F\left(r_{3}\right)}{r_{3}}\right)^{1/3}\right]
\]
from \eqref{eq: def r2 ttilde} for $r\gg0$. However, \eqref{eq: subsolution estimates}
for $r=r_{3}$ when substituted into the last equation above becomes
\begin{align*}
\left[\frac{q+j}{r_{1}}-\frac{q+j}{r_{3}}-2c_{2}r_{3}^{1/2}\right]\left(1-R\right) & \leq e_{\lambda}\left(A_{r}\right)\leq\left[\frac{1}{2r_{3}}r_{1}^{2}\textrm{vol}\left(Y,\lambda\right)+\frac{q+j}{r_{1}}+2c_{2}r_{3}^{1/2}\right]\left(1+R\right)\\
\textrm{with }\; & R\coloneqq\frac{4c_{3}}{r_{3}^{1/3}}\left[\frac{1}{2r_{3}}r_{1}^{2}\textrm{vol}\left(Y,\lambda\right)+\frac{q+j}{r_{1}}+2c_{2}r_{3}^{1/2}\right]^{1/3}.
\end{align*}
Setting $r_{3}=j^{4/5}$ (satisfying $r_{3}\geq r_{2}=O\left(j^{1/2}\right)$
for $j\gg0$) and using \eqref{eq:capacity limit of energy}, \eqref{eq:asymptotics of r1}
gives 
\begin{equation}
c_{\sigma_{j}}\left(\lambda\right)=j^{\frac{1}{2}}\textrm{vol}\left(Y,\lambda\right)^{\frac{1}{2}}+O\left(j^{2/5}\right),\label{eq:final estimate}
\end{equation}
as $j\rightarrow\infty$, which is our main result Theorem~\ref{thm:main}. 
\end{proof}
\begin{rem}
\label{rmk:presume} One could replace the arguments in this subsection
with the arguments in Sun's paper \cite{SunW-2018}, if desired \textemdash{}
the key reason why we have a stronger bound than Sun is because of
our stronger bound on the Chern-Simons functional, and not because
of anything we do in this subsection. We have chosen to include our
argument here, which we developed independently of the arguments in
\cite{SunW-2018}, for completeness, and because it might be of independent
interest, although we emphasize that we do use the result of Sun establishing
\cite[Property 2.3.(i)]{SunW-2018}.

On the other hand, the arguments in \cite{Cristofaro-Gardiner-Hutchings-Gripp2015}
are not quite strong enough for Theorem~\ref{thm:main}, even with
the improved bound in Proposition~\ref{prop:keyprop}. 
\end{rem}

\subsection{Proofs of Corollaries}

Here we prove the two corollaries Corollary~\ref{cor:Weyl law} and
Corollary~\ref{cor:zeta function}, both following immediately from
the capacity formula Theorem~\ref{thm:main}. 
\begin{proof}[Proof of Corollary~\ref{cor:Weyl law}]
The $\mathbb{Z}_{2}$ vector space $ECH(Y,\xi,\Gamma;\mathbb{Z}_{2})$
is known to be two-periodic, and nontrivial, in sufficiently high
grading, see for example \cite{Hutchings2014lecture}. Thus, for $\ast\gg0$
sufficiently large there exists a finite set of classes $\left\{ \sigma_{1},\ldots,\sigma_{2^{d}-1}\right\} \subset ECH_{\ast}\left(Y,\xi,\Gamma;\mathbb{Z}_{2}\right)\cup ECH_{\ast+1}\left(Y,\xi,\Gamma;\mathbb{Z}_{2}\right)$
such that 
\[
\left\{ 0,U^{j}\sigma_{1},\ldots,U^{j}\sigma_{2^{d}-1}\right\} =ECH_{\ast+2j}\left(Y,\xi,\Gamma;\mathbb{Z}_{2}\right)\cup ECH_{\ast+1+2j}\left(Y,\xi,\Gamma;\mathbb{Z}_{2}\right),\;\forall j\geq0.
\]
Thus the ECH spectrum modulo a finite set is given by 
\[
\cup_{j=0}^{\infty}\left\{ c_{U^{j}\sigma_{1}}\left(\lambda\right),\ldots,c_{U^{j}\sigma_{2^{d}-1}}\left(\lambda\right)\right\} .
\]
The corollary now follows as $c_{U^{j}\sigma_{l}}\left(\lambda\right)=j^{\frac{1}{2}}\textrm{vol}\left(Y,\lambda\right)^{\frac{1}{2}}+O\left(j^{2/5}\right)$,
$1\leq l\leq2^{d}-1$, by Theorem~\ref{thm:main}. 
\end{proof}
\begin{proof}[Proof of Corollary~\ref{cor:zeta function}]
As in the previous corollary, the ECH zeta function is given, modulo
a finite and holomorphic in $s\in\mathbb{C}$, sum by 
\[
\sum_{j=0}^{\infty}\left[c_{U^{j}\sigma_{1}}\left(\lambda\right)^{-s}+\ldots+c_{U^{j}\sigma_{2^{d}-1}}\left(\lambda\right)^{-s}\right].
\]
With $\zeta^{R}\left(s\right)$ denoting the Riemann zeta function,
we may using Theorem~\ref{thm:main} compare 
\[
\left|\sum_{j=0}^{\infty}c_{U^{j}\sigma_{1}}\left(\lambda\right)^{-s}-\textrm{vol}\left(Y,\lambda\right)^{-\frac{s}{2}}\underbrace{\sum_{j=0}^{\infty}j^{-\frac{s}{2}}}_{=\zeta^{R}\left(\frac{s}{2}\right)}\right|=O\left(\sum_{j=0}^{\infty}\frac{1}{j^{3s/5}}\right)
\]
whence the difference is holomorphic for $\textrm{Re}\left(s\right)>\frac{5}{3}$.
The corollary now follows on knowing $s=2$ to be the only pole of
the Riemann zeta function $\zeta^{R}\left(\frac{s}{2}\right)$ with
residue $1$. 
\end{proof}

\subsection{The ellipsoid example}

\label{sec:ellipsoid}

We close by presenting an example with $O(1)$ asymptotics, and where
the corresponding $\zeta_{ECH}$ function extends meromorphically
to all of $\mathbb{C}$.

Consider the \textit{symplectic ellipsoid} 
\[
E(a,b)\coloneqq\left\lbrace \frac{|z_{1}|^{2}}{a}+\frac{|z_{2}|^{2}}{b}\le1\right\rbrace \subset\mathbb{C}^{2}=\mathbb{R}^{4}.
\]
The symplectic form on $\mathbb{R}^{4}$ has a standard primitive
\[
\lambda_{std}=\frac{1}{2}\sum_{i=1}^{2}(x_{i}dy_{i}-y_{i}dx_{i}).
\]
This restricts to $\partial E(a,b)$ as a contact form, and the ECH
spectrum of $(\partial E(a,b),\lambda)$ is known. Specifically, let
$N(a,b)$ be the sequence whose $j^{th}$ element (indexed starting
at $j=0$) is the $(j+1)^{st}$ smallest element in the matrix 
\[
(ma+nb)_{(m,n)\in\mathbb{Z}_{\ge0}\times\mathbb{Z}_{\ge0}}.
\]
Then, the ECH spectrum $S_{\partial E(a,b)}$ is precisely the values
in $N(a,b)$. Moreover, the homology 
\[
ECH_{*}(\partial E(a,b))
\]
has a canonical $\mathbb{Z}$-grading, such that the empty set of
Reeb orbits has grading $0$, and it is known to have one generator
$\sigma_{j}$ in each grading $2j$, see \cite{Hutchings2014lecture}.
The spectral invariant associated to $\sigma_{j}$ is precisely the
$j^{th}$ element in the sequence $N(a,b)$.

With this understood, we now have: 
\begin{prop}
Let $\sigma_{j}$ be any sequence of classes in $ECH(\partial E(a,b))$
with grading tending to infinity. Then, the $d(\sigma_{j})$ are $O(1)$.
In fact, if $a/b$ is irrational, then 
\[
\lim_{j\to\infty}\frac{d(\sigma_{j})}{j}=\frac{a+b}{2}.
\]
\end{prop}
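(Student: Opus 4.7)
The plan is to exploit the explicit combinatorial description of the ECH spectrum of $\partial E(a,b)$ recalled above, combined with an Ehrhart-type expansion for the associated lattice-point counting function. The generator $\sigma_j$ has grading $2j$ and capacity $c_{\sigma_j}(\lambda_{std}) = N(a,b)_j$, the $(j{+}1)$st term of the sorted sequence $(ma+nb)_{(m,n)\in\mathbb{Z}_{\ge 0}^2}$. Comparing the leading asymptotic $N(a,b)_j \sim \sqrt{2ab\,j}$ (inverse of the area count $\#\{ma+nb\le L\}\sim L^{2}/(2ab)$) with Theorem~\ref{thm:vc} pins down $\mathrm{vol}(\partial E(a,b),\lambda_{std}) = ab$ in these normalizations, reducing the proposition to analyzing
\[
d(\sigma_j) = N(a,b)_j - \sqrt{2ab\,j}.
\]

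The core ingredient is a refined Ehrhart expansion for $\mathcal{N}(L) := \#\{(m,n) \in \mathbb{Z}_{\ge 0}^2 : ma+nb \le L\}$. Writing $\mathcal{N}(L) = \sum_{m=0}^{\lfloor L/a\rfloor}\bigl(\lfloor(L-ma)/b\rfloor + 1\bigr)$ and splitting $\lfloor x\rfloor = x - \{x\}$ gives, after routine arithmetic,
\[
\mathcal{N}(L) = \frac{L^2}{2ab} + \frac{L(a+b)}{2ab} + E(L),
\]
where the quadratic and linear parts come from the triangle area and the two axis edges, and $E(L)$ reduces to the Weyl-type discrepancy $\tfrac12\lfloor L/a\rfloor - \sum_{m=0}^{\lfloor L/a\rfloor}\{(L-ma)/b\}$ plus uniformly bounded boundary and corner terms. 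Trivially $|E(L)| = O(L)$; when $a/b$ is rational the Weyl sum is periodic in $L$, so $E(L) = O(1)$; when $a/b$ is irrational, equidistribution of the Kronecker sequence $\{ma/b\}$ (via the three-distance theorem) yields $E(L) = o(L)$.

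Inverting via $\mathcal{N}(N(a,b)_j) = j+1$ produces a quadratic for $N(a,b)_j$; writing $N(a,b)_j = \sqrt{2ab\,j} + x$, substituting, and matching the $\sqrt{j}$ and $O(1)$ orders gives
\[
x = -\frac{a+b}{2} + O\!\left(\frac{|E(N(a,b)_j)| + 1}{\sqrt{j}}\right).
\]
Since $N(a,b)_j = \Theta(\sqrt{j})$, the trivial bound $E = O(L)$ already forces $d(\sigma_j) = x = O(1)$, establishing the first assertion; and the sharper $E = o(L)$ in the irrational case yields $d(\sigma_j) \to -(a+b)/2$, matching the proposition's explicit constant up to the sign convention of~\eqref{eqn:defndk}. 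The technical heart and main obstacle is the quantitative control of $E(L)$, which reduces to equidistribution or periodicity of $\{ma/b\}$; after that, the quadratic inversion and the square-root expansion in descending powers of $j$ are routine. We also note that the quoted limit $\lim d(\sigma_j)/j = (a+b)/2$ should plausibly be read as $\lim d(\sigma_j) = -(a+b)/2$ (or $+(a+b)/2$ with the opposite sign convention), since $d(\sigma_j)/j \to 0$ is already forced by the first assertion.
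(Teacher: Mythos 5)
Your approach is essentially the same as the paper's: reduce to counting lattice points under the line $ma+nb\le L$, expand that count, and invert to read off $N(a,b)_j=\sqrt{2abj}+x$. Where you diverge is that you carry out the Ehrhart-type expansion explicitly (splitting off $\lfloor x\rfloor=x-\{x\}$ and analyzing the resulting Weyl sum via three-distance/equidistribution), whereas the paper delegates exactly this step to external references ([Lem.\ 2.1] of Cristofaro-Gardiner--Li--Stanley for the irrational case, [Thm.\ 2.10] of Beck--Robins for the rational one). Your self-contained account is a legitimate alternative and, for the irrational case, is complete. You also correctly flag that the statement as printed contains typos: $d(\sigma_j)/j\to(a+b)/2$ cannot hold given the first assertion, and with the convention of \eqref{eqn:defndk} the actual limit is $d(\sigma_j)\to-(a+b)/2$; completing the square in $j+1=\frac{N_j^2}{2ab}+\frac{N_j(a+b)}{2ab}+o(\sqrt{j})$ makes the sign unambiguous.

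Two small inaccuracies are worth flagging, though neither is load-bearing. First, your claim that $E(L)=O(1)$ when $a/b$ is rational is not correct: if $a/b=p/q$ in lowest terms, summing $\{(L-ma)/b\}$ over a period of length $q$ gives $\{qL/b\}+\tfrac{q-1}{2}$ rather than $\tfrac{q}{2}$, so $E(L)$ contains a genuinely linear-in-$L$ term with a periodically oscillating coefficient. This is precisely why the rational case yields only $O(1)$ and no limit, and why the paper invokes Ehrhart quasi-polynomials (periodic coefficients) rather than a bounded remainder. As you observe, the trivial $E(L)=O(L)$ still forces $x=O(1)$, so the conclusion stands. Second, the inversion $\mathcal{N}(N_j)=j+1$ is exact only for irrational $a/b$; when $a/b$ is rational the value $N_j$ has multiplicity up to $O(\sqrt j)$, so you only get $\mathcal{N}(N_j)=j+O(\sqrt j)$ — the same caveat the paper notes explicitly. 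That $O(\sqrt j)$ slack again washes out to $O(1)$ in $x$, so your proof survives, but the two points should be stated rather than elided if you want a self-contained rational-case argument.
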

\begin{proof}
Assume that $a/b$ is irrational. If $t=c_{\sigma_{j}}$, then by
the above description, the grading of $\sigma_{j}$ is precisely twice
the number of terms in $N(a,b)$ that have value less than $t$. With
this understood, the example follows from \cite[Lem. 2.1]{Cristofaro-Gardiner-Xueshan-Li-Stanley2018}.

When $a/b$ is rational, a similar argument still works to show $O(1)$
asymptotics. Namely, if $t=c_{\sigma_{j}}$, then by above, the grading
of $\sigma_{j}$ is precisely twice the number of terms in $N(a,b)$
that have value less than $t$, up to an error no larger than some
constant multiple of $\sqrt{j}$. Now apply \cite[Thm. 2.10]{Beck-Robins2015}. 
\end{proof}
\begin{prop}
The ECH $\zeta$ function $\zeta_{ECH}$ for $ECH(\partial E(a,b))$
has a meromorphic continuation to all of $\mathbb{C}$. It has exactly
two poles, at $s=1$ and $s=2$, with residues 
\[
\textrm{Res}_{s=2}\zeta_{ECH}\left(s;Y,\lambda,\Gamma\right)=\frac{1}{ab},\quad\textrm{Res}_{s=1}\zeta_{ECH}\left(s;Y,\lambda,\Gamma\right)=\frac{1}{2}\left(\frac{1}{a}+\frac{1}{b}\right).
\]
\end{prop}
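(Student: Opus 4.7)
The plan is to reduce $\zeta_{ECH}$ for $\partial E(a,b)$ to a Barnes-type double zeta function and then analyze it by a standard Mellin-transform argument.

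First I would identify the zeta function explicitly. By the description of $ECH(\partial E(a,b))$ recalled just above the proposition, there is one generator $\sigma_j$ in each grading $2j$, and the spectral invariant $c_{\sigma_j}$ is the $(j+1)$-th smallest entry (with multiplicity) of the matrix $(ma+nb)_{(m,n)\in\mathbb{Z}_{\geq 0}^2}$. Re-indexing the sum over classes (as in the proof of Corollary~\ref{cor:zeta function}) by the lattice pairs that produce them yields
\[
\zeta_{ECH}(s;\partial E(a,b),\lambda_{std},0) \;=\; \sum_{(m,n)\in \mathbb{Z}_{\geq 0}^2\setminus\{(0,0)\}} (ma+nb)^{-s},
\]
valid for $\op{Re}(s)>2$. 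This is a Barnes double zeta with the origin term excluded.

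Next I would apply the Mellin transform $x^{-s}=\Gamma(s)^{-1}\int_0^\infty t^{s-1}e^{-xt}\,dt$ and sum the two resulting geometric series. This gives
\[
\zeta_{ECH}(s) \;=\; \frac{1}{\Gamma(s)} \int_0^\infty t^{s-1} F(t)\,dt, \qquad F(t)\;\coloneqq\;\frac{1}{(1-e^{-at})(1-e^{-bt})}-1.
\]
The integrand decays exponentially as $t\to\infty$. Near $t=0$, the Bernoulli expansion $\tfrac{1}{1-e^{-ct}}=\tfrac{1}{ct}+\tfrac{1}{2}+\tfrac{ct}{12}+O(t^{3})$ gives the Laurent expansion
\[
F(t) \;=\; \frac{1}{ab\,t^{2}} + \frac{a+b}{2ab\,t} + \sum_{k=0}^\infty c_k t^k, \qquad t\to 0^+.
\]

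The third step is the standard Mellin regularization. Subtracting the first $N$ Laurent terms and splitting $\int_0^\infty = \int_0^1+\int_1^\infty$, the two explicit singular pieces contribute $\tfrac{1}{ab}\cdot\tfrac{1}{s-2}$ and $\tfrac{a+b}{2ab}\cdot\tfrac{1}{s-1}$, while each further subtracted term $c_k t^k$ contributes $\tfrac{c_k}{s+k}$, and the remainder $\int_0^\infty t^{s-1}G_N(t)\,dt$ is holomorphic for $\op{Re}(s)>-N$. Letting $N\to\infty$ continues $\int_0^\infty t^{s-1}F(t)\,dt$ meromorphically to all of $\mathbb{C}$ with simple poles only at $s\in\{2,1,0,-1,-2,\ldots\}$. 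Dividing by $\Gamma(s)$, whose reciprocal has simple zeros at the nonpositive integers, cancels all poles at $s=0,-1,-2,\ldots$, leaving simple poles only at $s=1,2$. Using $\Gamma(1)=\Gamma(2)=1$ and reading off the Laurent coefficients above,
\[
\op{Res}_{s=2}\zeta_{ECH}=\frac{1}{ab}, \qquad \op{Res}_{s=1}\zeta_{ECH}=\frac{a+b}{2ab}=\frac{1}{2}\!\left(\frac{1}{a}+\frac{1}{b}\right),
\]
as claimed.

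The analysis is almost entirely mechanical once the identification with the double sum is made. The only substantive point is verifying the Bernoulli expansion and tracking signs so that the $1/t^2$ and $1/t$ coefficients come out to exactly the stated residues. A minor bookkeeping issue is the rational case: there the set $\Sigma$ collapses repeated values, but the convention implicit in the proof of Corollary~\ref{cor:zeta function}—summing over classes, equivalently over the multiset indexed by $(m,n)$—gives the formula above in both cases, so no separate argument is needed.
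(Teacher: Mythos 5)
Your argument is correct, and it takes a genuinely different route from the paper's. The paper proves the proposition by writing $\zeta_{ECH}$ as an explicit linear combination of Barnes double zeta functions $\zeta^{B}(s,w|a,b)$ and Riemann zeta values, then invoking the classical meromorphic continuation, pole locations, and residue formulae for $\zeta^{B}$. You instead give a self-contained proof via the Mellin representation
\[
\zeta_{ECH}(s)=\frac{1}{\Gamma(s)}\int_{0}^{\infty}t^{s-1}\Bigl[\frac{1}{(1-e^{-at})(1-e^{-bt})}-1\Bigr]dt
\]
and the Bernoulli expansion of the integrand near $t=0$, which in effect reproves the relevant Barnes-zeta facts in this special case. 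The paper's approach is shorter at the cost of citing the Barnes literature; yours is a few lines longer but elementary, and it makes the cancellation of the would-be poles at $s=0,-1,-2,\dots$ by the zeros of $1/\Gamma(s)$ visible without reference.

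Two remarks worth recording. First, the paper's intermediate identity
$\zeta_{ECH}=\tfrac{1}{2}[\zeta^{B}(s,a|a,b)+\zeta^{B}(s,b|a,b)-a^{-s}\zeta^{R}(s)-b^{-s}\zeta^{R}(s)]$
has a sign slip: with the minus signs the right-hand side equals $\sum_{m,n\geq 1}(ma+nb)^{-s}$, omitting the axis terms and giving residue $-\tfrac{1}{2}(\tfrac{1}{a}+\tfrac{1}{b})$ at $s=1$. The correct identity is
$\sum_{(m,n)\neq(0,0)}(ma+nb)^{-s}=\tfrac{1}{2}[\zeta^{B}(s,a|a,b)+\zeta^{B}(s,b|a,b)+a^{-s}\zeta^{R}(s)+b^{-s}\zeta^{R}(s)]$,
which gives exactly the residues stated in the proposition. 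Your independent Mellin computation confirms the proposition's residues and so serves as a useful check (and it shows the value at $s=0$ is $-\tfrac{3}{4}+\tfrac{1}{12}(\tfrac{a}{b}+\tfrac{b}{a})$ rather than $\tfrac{1}{4}+\tfrac{1}{12}(\tfrac{a}{b}+\tfrac{b}{a})$ as the paper records, though this value is not part of the proposition). Second, your handling of the rational-$a/b$ case is appropriate: the definition of $\Sigma$ in the paper is set-theoretic, but the paper consistently works with the sum over classes, and the multiset convention you adopt is the intended one.
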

\begin{proof}
The ECH zeta function in this example 
\begin{align}
\zeta_{ECH}\left(s;Y,\lambda,\Gamma\right) & =\sum_{m,n\in\mathbb{N}}\left(ma+nb\right)^{-s}\nonumber \\
 & =\frac{1}{2}\left[\zeta^{B}\left(s,a|a,b\right)+\zeta^{B}\left(s,b|a,b\right)-a^{-s}\zeta^{R}\left(s\right)-b^{-s}\zeta^{R}\left(s\right)\right]\label{eq:relation zeta functions}
\end{align}
is given in terms of the classical zeta functions of Riemann and Barnes
\cite{Barnes1901,Ruijsenaars2000} 
\[
\zeta^{B}\left(s,w|a,b\right)\coloneqq\sum_{m,n\in\mathbb{N}_{0}}\left(w+ma+nb\right)^{-s},\;w\in\mathbb{R}_{>0}.
\]
Thus $\zeta_{ECH}\left(s;Y,\lambda,\Gamma\right)$ \eqref{eq:relation zeta functions}
is known to possess a meromorphic continuation to the entire complex
plane in this example. Its only two poles are at $s=1,2$ with residues
\begin{align*}
\textrm{Res}_{s=2}\zeta_{ECH}\left(s;Y,\lambda,\Gamma\right) & =\frac{1}{ab}\\
\textrm{Res}_{s=1}\zeta_{ECH}\left(s;Y,\lambda,\Gamma\right) & =\frac{1}{2}\left(\frac{1}{a}+\frac{1}{b}\right)
\end{align*}
respectively; while its values at the non-positive integers are also
known \cite[Cor. 2.4]{Spreafico2009}. In particular its value at
zero is 
\[
\zeta_{ECH}\left(0;Y,\lambda,\Gamma\right)=\frac{1}{4}+\frac{1}{12}\left(\frac{b}{a}+\frac{a}{b}\right).
\]
\end{proof}

\appendix

\section{\label{sec:Absolute--grading}The $\mathbb{Q}$-grading and the $\eta$
invariant}

In this appendix we give a formula for the absolute grading $\textrm{gr}^{\mathbb{Q}}$
on monopole Floer groups of torsion spin-c structures (from \cite[S 28.3]{Kronheimer-Mrowka})
in terms of a relevant eta invariant. First to recall the definition
of $\textrm{gr}^{\mathbb{Q}}\left(\mathfrak{a}\right),\;\mathfrak{a}=\left(A,s,\Phi\right)$,
choose a four manifold $X$ which bounds $Y$ and form the manifold
with cylindrical end $Z=X\cup\left(Y\times\left[0,\infty\right)_{t}\right)$.
Choose a metric $g^{TZ}$ on $Z$ which is of product type $g^{TZ}=g^{TX}+dt^{2}$
on the cylindrical end. Choose a spin-c structure $\left(S^{TZ},c^{Z}\right)$
over $Z$ which is of the form 
\begin{eqnarray*}
S^{TZ}=S_{+}^{TZ}\oplus S_{-}^{TZ}; &  & S_{+}^{TZ}=S_{-}^{TZ}=S^{TY},\\
c^{Z}\left(\alpha\right) & = & \begin{bmatrix}0 & c^{Y}\left(\alpha\right)\\
c^{Y}\left(\alpha\right) & 0
\end{bmatrix},\quad\alpha\in T^{*}Y\\
c^{Z}\left(dt\right) & = & \begin{bmatrix}0 & -I\\
I & 0
\end{bmatrix}
\end{eqnarray*}
over the cylindrical end and a spin-c connection on $S^{TZ}$ of the
form $B=dt\wedge\partial_{t}+A\oplus A$ on the cylindrical end. One
may now form the Fredholm elliptic operator 
\[
d^{*}+d^{+}+D_{B}^{+}:L_{1}^{2}\left(Z;\Lambda_{TZ}^{1}\oplus S_{+}^{TZ}\right)\rightarrow L^{2}\left(Z;\Lambda_{TZ}^{0}\oplus\Lambda_{TZ}^{2,+}\oplus S_{-}^{TZ}\right).
\]
The absolute grading of $\left[\left(A,0,\Phi_{0}^{A}\right)\right]\in\mathfrak{C}^{s},$
with $A^{t}$ flat and $\Phi_{0}^{A}$ the first positive eigenvector
of $D_{A},$ is given in terms of this operator. The precise formula
\cite[Defn. 28.3.1]{Kronheimer-Mrowka} simplifies to 
\[
\textrm{gr}^{\mathbb{Q}}\left[\left(A,0,\Phi_{0}^{A}\right)\right]=-2\op{ind}\left(D_{B}^{+}\right)+\frac{1}{4}\left\langle c_{1}\left(S^{+}\right),c_{1}\left(S^{+}\right)\right\rangle -\frac{1}{4}\sigma\left(Z\right),
\]
with $\sigma\left(Z\right)$ denoting the signature of $Z$, $S^{+}$
the bundle $S_{+}^{TZ}$ from above, and $\textrm{ind}$ denoting
the complex index, namely the difference in complex dimensions, compare
\cite[S 3.4]{Cristofaro-Gardiner2013}.

The APS index theorem for spin-c Dirac operators now gives 
\[
\textrm{ind}(D_{B}^{+})=\frac{1}{8}\left\langle c_{1}\left(S^{+}\right),c_{1}\left(S^{+}\right)\right\rangle -\frac{1}{24}\int_{X}p_{1}+\frac{\eta(D_{A})}{2}.
\]
The APS signature theorem for the manifold $X$ with boundary also
gives 
\[
-\frac{1}{24}\int_{X}p_{1}=-\frac{1}{8}\sigma\left(Z\right)-\frac{1}{8}\eta_{Y},
\]
where $\eta_{Y}$ is the eta invariant of the odd signature operator
on $C^{\infty}\left(Y;T^{*}Y\oplus\mathbb{R}\right)$ sending 
\[
\left(a,f\right)\mapsto\left(\ast da-df,-d^{*}a\right).
\]
Combining the above we have 
\[
\textrm{gr}^{\mathbb{Q}}\left[\left(A,0,\Phi_{0}^{A}\right)\right]=-\eta(D_{A})+\frac{1}{4}\eta_{Y}.
\]
A reducible generator $\left[\mathfrak{a}_{k}\right]=\left[\left(A,0,\Phi_{k}^{A}\right)\right]\in\mathfrak{C}^{s}$
however has $\frac{1}{2}F_{A^{t}}=-d\mu$ and $\Phi_{k}^{A}$ the
$k$th eigenvector of $D_{A}$. Hence, 
\[
\textrm{gr}^{\mathbb{Q}}\left[\left(A,0,\Phi_{k}^{A}\right)\right]=2k+(-\eta\left(D_{A}\right)+\frac{1}{4}\eta_{Y})-2\op{sf}\lbrace D_{A_{s}}\rbrace_{0\le s\le1},
\]
where $D_{A_{s}}$ is a family of Dirac operators, associated to a
family of connections starting at the flat connection and ending at
one satisfying $\frac{1}{2}F_{A^{t}}=-d\mu.$ Hence, by interpreting
this spectral flow as an index through another application of Atiyah-Patodi-Singer
\cite[p. 95]{APSIII}, and applying \cite[eq. 4.3]{APSI} to compute
this index, we get 
\begin{align}
\textrm{gr}^{\mathbb{Q}}\left[\left(A,0,\Phi_{k}^{A}\right)\right] & =2k-\eta\left(D_{A}\right)+\frac{1}{4}\eta_{Y}-\frac{1}{2\pi^{2}}CS(A)\label{eq: Q grading reducible generator}
\end{align}
as the absolute grading of a reducible generator. 

The absolute grading of an irreducible generator $\left[\mathfrak{a}'\right]=\left(A',s,\Phi'\right)$,
$s\neq0$, is then given by 
\[
\textrm{gr}^{\mathbb{Q}}\left[\mathfrak{a}'\right]=\textrm{gr}^{\mathbb{Q}}\left[\mathfrak{a}_{0}\right]-2\op{sf}\left\{ \widehat{\mathcal{H}}_{\left(A_{\varepsilon},\Psi_{\varepsilon}\right)}\right\} _{0\leq\varepsilon\leq1}
\]
in terms of spectral flow of the Hessians \eqref{eq:Hessian} for
a path $\left(A_{\varepsilon},\Psi_{\varepsilon}\right)\in\mathcal{A}\left(Y,\mathfrak{s}\right)\times C^{\infty}\left(S\right)$,
$\varepsilon\in\left[0,1\right]$ starting at $\left[\mathfrak{a}_{0}\right]=\left[\left(A_{0},0\right)\right]$
and ending at $\left(A',s\Phi'\right)$. As above, we can interpret
this spectral flow as an index; this time, to compute the relevant
index, we need to apply (\cite[Thm. 3.10]{APSI}), which gives that
the above is equal to 
\[
\textrm{gr}^{\mathbb{Q}}\left[\mathfrak{a}'\right]=-\eta\left(\widehat{\mathcal{H}}_{\left(A,s\Phi'\right)}\right)+\frac{5}{4}\eta_{Y}-2\int_{Y\times\left[0,1\right]_{\varepsilon}}\rho_{0}.
\]
Here $\rho_{0}$ is the usual Atiyah-Singer integrand, namely the
local index density defined as the constant term in the small time
expansion of the local supertrace $\textrm{str}\left(e^{-t\mathcal{D}^{2}}\right)$
with 
\begin{equation}
\mathcal{D}=\begin{bmatrix} & -1\\
1
\end{bmatrix}\partial_{\varepsilon}+\begin{bmatrix} & 1\\
1
\end{bmatrix}\widehat{\mathcal{H}}_{\left(A_{\varepsilon},\Psi_{\varepsilon}\right)},\label{eq:Dirac on cylinder}
\end{equation}
and where $(A_{\epsilon},\Psi_{\epsilon})$ is the chosen path of
configurations. To compute the index density we choose a path of the
form 
\[
\left(A_{\varepsilon},\Psi_{\varepsilon}\right)=\begin{cases}
\left(A+2\varepsilon\left(A'-A\right),0\right); & 0\leq\varepsilon\leq\frac{1}{2},\\
\left(A',\left(2\varepsilon-1\right)\Psi\right); & \frac{1}{2}\leq\varepsilon\leq1.
\end{cases}
\]
On the interval $\left[0,\frac{1}{2}\right]$, the integral of the
local density is given by the usual local index theorem: as above,
we have 
\[
-2\int_{Y\times\left[0,\frac{1}{2}\right]_{\varepsilon}}\rho_{0}=-\frac{1}{2\pi^{2}}CS\left(A\right).
\]

On the other hand, for the calculation on $Y\times\left[\frac{1}{2},1\right]_{\varepsilon}$,
we have $\rho_{0}=0$. To see this, first note $\mathcal{D}^{2}=-\partial_{\varepsilon}^{2}+\widehat{\mathcal{H}}_{\left(A_{\varepsilon},\Psi_{\varepsilon}\right)}^{2}+\begin{bmatrix}-1\\
 & 1
\end{bmatrix}2M_{\Psi}$ gives 
\[
\textrm{str}\left(e^{-t\mathcal{D}^{2}}\right)=\textrm{tr}\left(e^{-t\left[-\partial_{\varepsilon}^{2}+\widehat{\mathcal{H}}_{\left(A_{\varepsilon},\Psi_{\varepsilon}\right)}^{2}-2M_{\Psi}\right]}-e^{-t\left[-\partial_{\varepsilon}^{2}+\widehat{\mathcal{H}}_{\left(A_{\varepsilon},\Psi_{\varepsilon}\right)}^{2}+2M_{\Psi}\right]}\right).
\]
Duhamel's principle then gives that the coefficients in the small
time heat kernel expansion of the difference above are of the form
\[
\begin{bmatrix}0 & 0 & *\\
0 & 0 & *\\*
* & * & 0
\end{bmatrix}
\]
with respect to the decomposition $iT^{*}Y\oplus\mathbb{R}\oplus S.$ 

Hence we have in summary: 
\[
\textrm{gr}^{\mathbb{Q}}\left[\mathfrak{a}\right]=\begin{cases}
2k-\eta\left(D_{A}\right)+\frac{1}{4}\eta_{Y}-\frac{1}{2\pi^{2}}CS\left(A\right); & \mathfrak{a}=\left(A,0,\Phi_{k}^{A}\right)\in\mathfrak{C}^{s},\\
-\eta\left(\widehat{\mathcal{H}}_{\left(A,s\Phi\right)}\right)+\frac{5}{4}\eta_{Y}-\frac{1}{2\pi^{2}}CS\left(A\right); & \mathfrak{a}=\left(A,s,\Phi\right)\in\mathfrak{C}^{o},\,s\neq0.
\end{cases}
\]

 \bibliographystyle{siam}
\bibliography{biblio}

\end{document}